\theoremstyle{definition}
\newtheorem{example}[theorem]{Example}
\theoremstyle{remark}
\newtheorem{remark}[theorem]{Remark}
\newcommand{\TheTitle}{Parametric Interpolation Framework for Scalar Conservation Laws} 
\newcommand{\TheAuthors}{G. McGregor, J.-C. Nave}
\headers{\TheTitle}{\TheAuthors}
\title{{Parametric Interpolation Framework for Scalar Conservation Laws}}
\author{
  Geoffrey McGregor\thanks{Department of Mathematics, McGill University, Montreal, QC, Canada
    (\email{Geoffrey.McGregor@mail.mcgill.ca}}. The research of GMc was supported in part by the Schulich Scholarship and the Murata Fellowship at McGill University.
  \and
  Jean-Christophe Nave\thanks{Department of Mathematics, McGill University, Montreal, QC, Canada
    (\email{jcnave@math.mcgill.ca} The research of JCN was supported in part by the NSERC Canada Discovery Grants Program.}
}
\begin{document}

\maketitle

\begin{abstract}
In this paper we present a novel framework for obtaining high-order numerical methods for scalar conservation laws in one-space dimension for both the homogeneous and non-homogeneous case. The numerical schemes for these two settings are somewhat different in the presence of shocks, however at their core they both rely heavily on the solution curve being represented parametrically. By utilizing high-order parametric interpolation techniques we succeed to obtain fifth order accuracy ( in space ) everywhere in the computation domain, including the shock location itself. In the presence of source terms a slight modification is required, yet the spatial order is maintained but with an additional temporal error appearing. We provide a detailed discussion of a sample scheme for non-homogeneous problems which obtains fifth order in space and fourth order in time even in the presence of shocks. 
\end{abstract}

\begin{keywords}
Conservation laws, numerical methods, interpolation.
\end{keywords}

\begin{AMS}
  35L65, 35L67, 65M25
\end{AMS}

\section{Introduction}\label{Intro}
In this paper we consider the 1-D scalar conservation law,\\
\begin{equation}
\begin{cases}
u_t+(F(u))_x=Q(u,x,t)\label{Cauchy}\\
u(x,0)=g(x),
\end{cases}
\end{equation}
where $g$ is piecewise smooth and both $F$ and $Q$ are smooth functions in their respective domains. Here, we focus on the case where $F$ is uniformly convex. 

When seeking numerical solutions to (\ref{Cauchy}), it remains a great difficulty to appropriately handle the formation and propagation of jump discontinuities (shocks). To this end, our goal in the present paper is to develop a numerical framework for solving (\ref{Cauchy}) to high-order such that shocks are located to the same order (or better).

Developing convergent and stable numerical methods for (\ref{Cauchy}) has been an ongoing area of research for well over half of a century. Typically, methods focus on obtaining high-order accuracy in smooth regions of the solution while preserving desirable properties, such as boundedness and sharp features, near discontinuities.  One common approach to this, as discussed in \cite{LeVeque,LevFinite}, is to allow the numerical technique used to vary within the computation domain depending on some numerically approximated smoothness. Finite difference methods can utilize slope-limiters to switch between a high-accuracy scheme in smooth regions and a stable scheme near shocks. Other methods, such as ENO \cite{ENO} and WENO \cite{WENO} schemes, construct a local stencil or weights based on the local data. The resulting schemes are high-order in smooth regions of the solution and avoid spurious oscillations near discontinuities. These schemes form an important part of the current research within numerical conservation laws, for example see \cite{fu2018new,wang2018improved,du2018hermite,hu20186th,shu2016high,qiu2004hermite,qiu2005hermite,luo2007hermite}. Similarly, finite volume methods, which have the added feature of being exactly conservative, utilize flux-limiters, for example see \cite{FinVolLimiter,LevFinite}, to switch between a high-accuracy numerical flux, and a stable numerical flux with lesser accuracy. Overall the focus near shocks is to keep the numerical solution stable, or bounding the variation, to obtain a convergent numerical scheme. These methods accomplish their goal, and are applicable to systems of hyperbolic conservation laws and higher dimensions, although they have difficulty tracking shock position with high-order accuracy. This is a direct result of requiring low-order schemes at the shock location to prevent spurious oscillations and other instabilities. Front tracking \cite{FrontTrack} and Godunov methods \cite{Godunov} explicitly use the Rankine-Hugoniot condition to obtain the correct shock speed, however, generalizations of these methods for predicting shock position to high-order proves to be difficult.  One advantage of these methods is their applicability to systems of conservations laws, however, Godunov type methods can be computationally expensive when resolving Riemann problems with complex wave structure. To overcome this, approximate Riemann solvers such as the HLL method \cite{HLL} and HLLC method \cite{HLLC} were developed to address the computational complexity issue. These methods are particularly useful for solving real-world problems in hyperbolic conservation laws, for example see \cite{balsara2015three,vides2015simple,goetz2018family}. 

Thus far we have focused on the homogeneous case of (\ref{Cauchy}). The addition of source terms adds a considerable challenge from a numerical perspective and the resulting schemes are often quite different from their homogeneous counterpart.  For some examples, see \cite{tseng2004improved,burguete2001efficient,xing2006high,wang2012high}.  One way to handle source terms is to perform a splitting of the horizontal motion, determined by the flux function $F$, and vertical motion, given by the source term $Q$. For example in \cite{Seibold}, the authors solve the homogeneous problem by flowing particles under the method of characteristics and utilizing an area-preserving linear interpolation when merging particles. In the presence of source terms, the characteristic flow and interpolation step ignore the source term, and then once completed the particles are moved vertically according to the source term. The resulting method predicts the shock position to first order in time.

In the present paper we are concerned with finding a numerical framework for solving (\ref{Cauchy}) to high spatial and temporal order while guaranteeing the shock position is determined to high-order as well. While the majority of current research is focused on systems and higher dimensions, methods able to track shock location to high-order are lacking, even in the 1-D scalar case. We therefore focus our efforts on the scalar case first to lay the groundwork for future research in high-order shock location methods for systems and higher dimensions. To achieve our goal, we rely heavily on the characteristic curves associated with (\ref{Cauchy}).  In general, it is not possible to use polynomial interpolation for the characteristic curves as they become multivalued when shocks form, leading to an ill-defined polynomial interpolation problem.  One possible alternative is parametric polynomial interpolation which enables the representation of a larger class of curves in the plane. Before getting into the specifics of parametric interpolation, we first discuss how a parametric representation of the characteristic curves may be used to obtain physically relevant weak solutions of (\ref{Cauchy}). 

The method of characteristics yields the characteristic equations
\begin{align}
\dot{x}&=F'(u) \label{CharEq}\\
\dot{u}&=Q(u,x,t), \nonumber
\end{align}
which has solution parametrized by $x_0$ given by $\langle x(x_0,t) \, ,\, u(x_0,t) \rangle$, where
\begin{align*}
\langle x(x_0,0) \, ,\, u(x_0,0) \rangle&=\langle x_0 \, ,\, g(x_0) \rangle, \quad \text{and,}\\
\frac{\partial}{\partial t}\langle x(x_0,t) \, ,\, u(x_0,t) \rangle&=\langle F'(u), Q(u,x,t) \rangle.
\end{align*}
The curve $\langle x(x_0,t) \, ,\, u(x_0,t) \rangle$ remains a parametrization of the strong solution to (\ref{Cauchy}) up to time $t^*$, provided $\frac{\partial}{\partial x_0}x(x_0,t)>0$ for all $x_0$ in the computation domain and $0\leq t<t^*$. If at some point $x_0^*$ we have $\frac{\partial}{\partial x_0}x(x_0^*,t)<0$, for $t>0$, then the parametric curve becomes multi-valued and a projection is required to recover the appropriate weak solution to (\ref{Cauchy}). In the homogeneous setting it is common to employ an equal-area projection ( also known as the equal-area principle ), see \cite{LeVeque,LevFinite} and Figure \ref{Proj Sketch} for an illustration. In the non-homogeneous case however, it is unclear which projection yields the desired weak solution. In the present work, we construct a method able to capture the shock location, as given by a temporal integration of the Rankine-Hugoniot condition \cite{Rankine,Hugo}, to high-order, even in the non-homogeneous case. Most importantly, since the curves to the left and right of the shock are obtained through the parametric interpolation of (\ref{CharEq}) to high temporal and spatial order, we are therefore able to show that the weak solution is obtained to high-order as well.
\begin{figure}[!ht]
\begin{center}
\includegraphics[width=40mm,height=30mm]{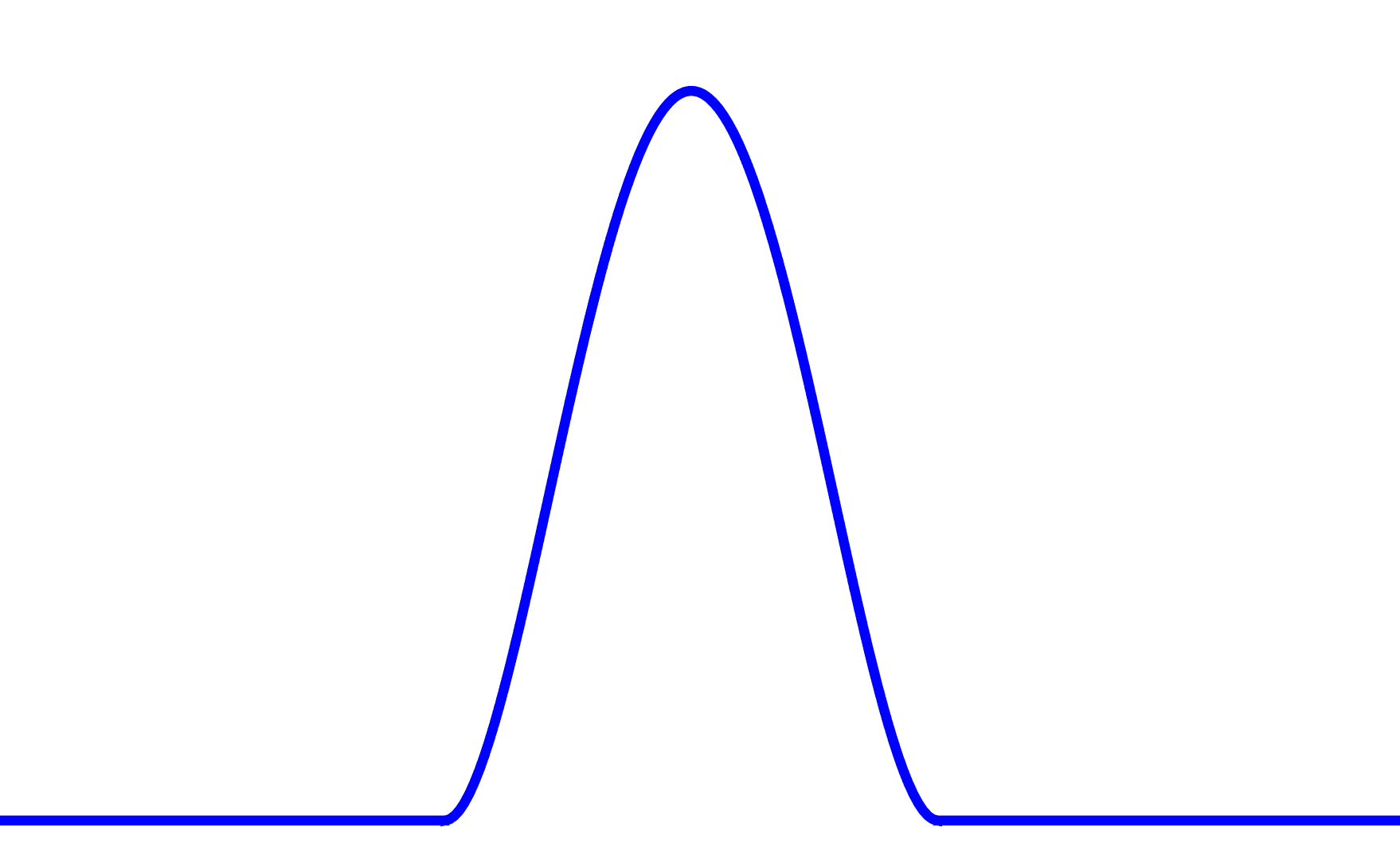}
\includegraphics[width=40mm,height=30mm]{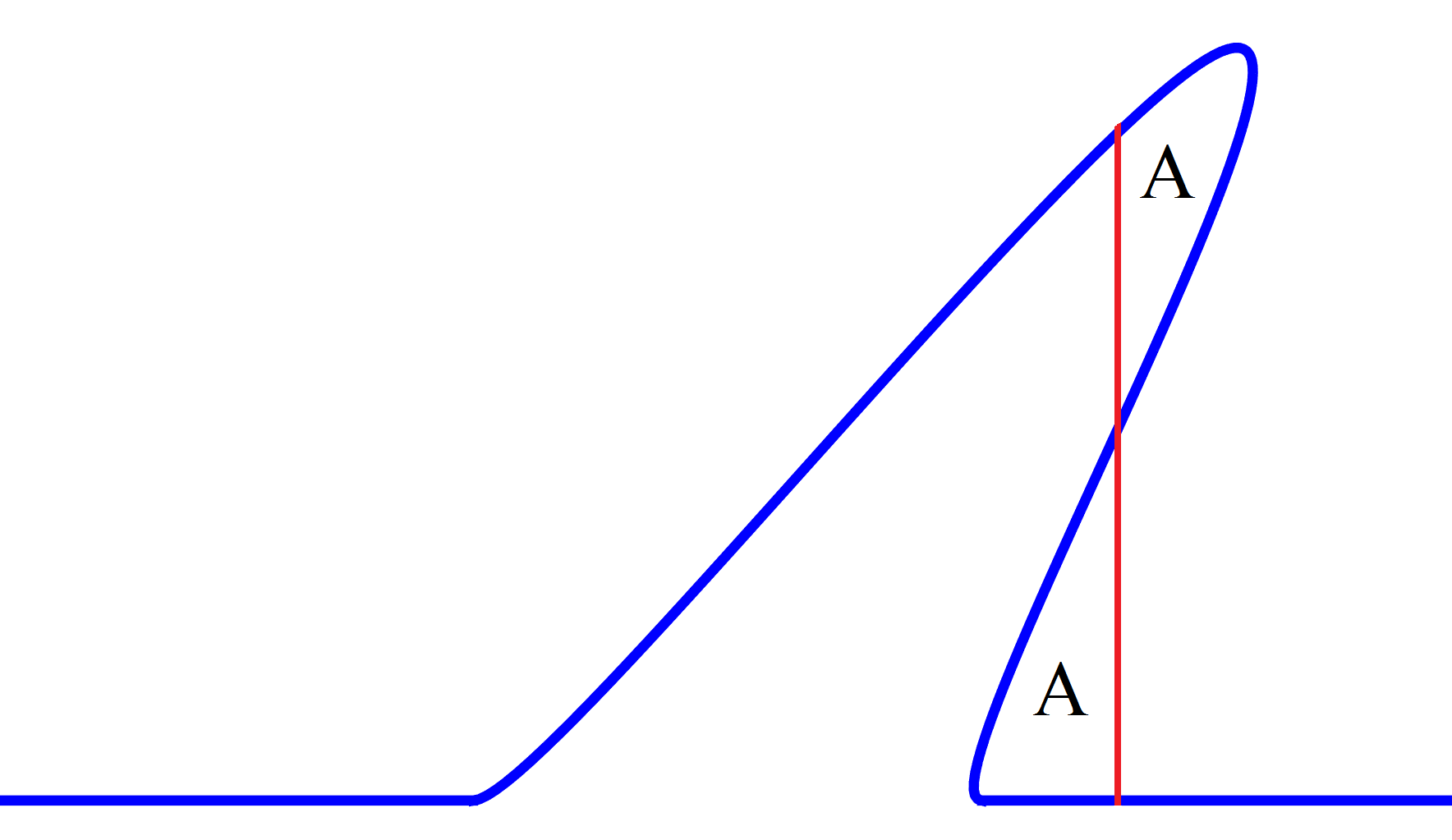}
\includegraphics[width=40mm,height=30mm]{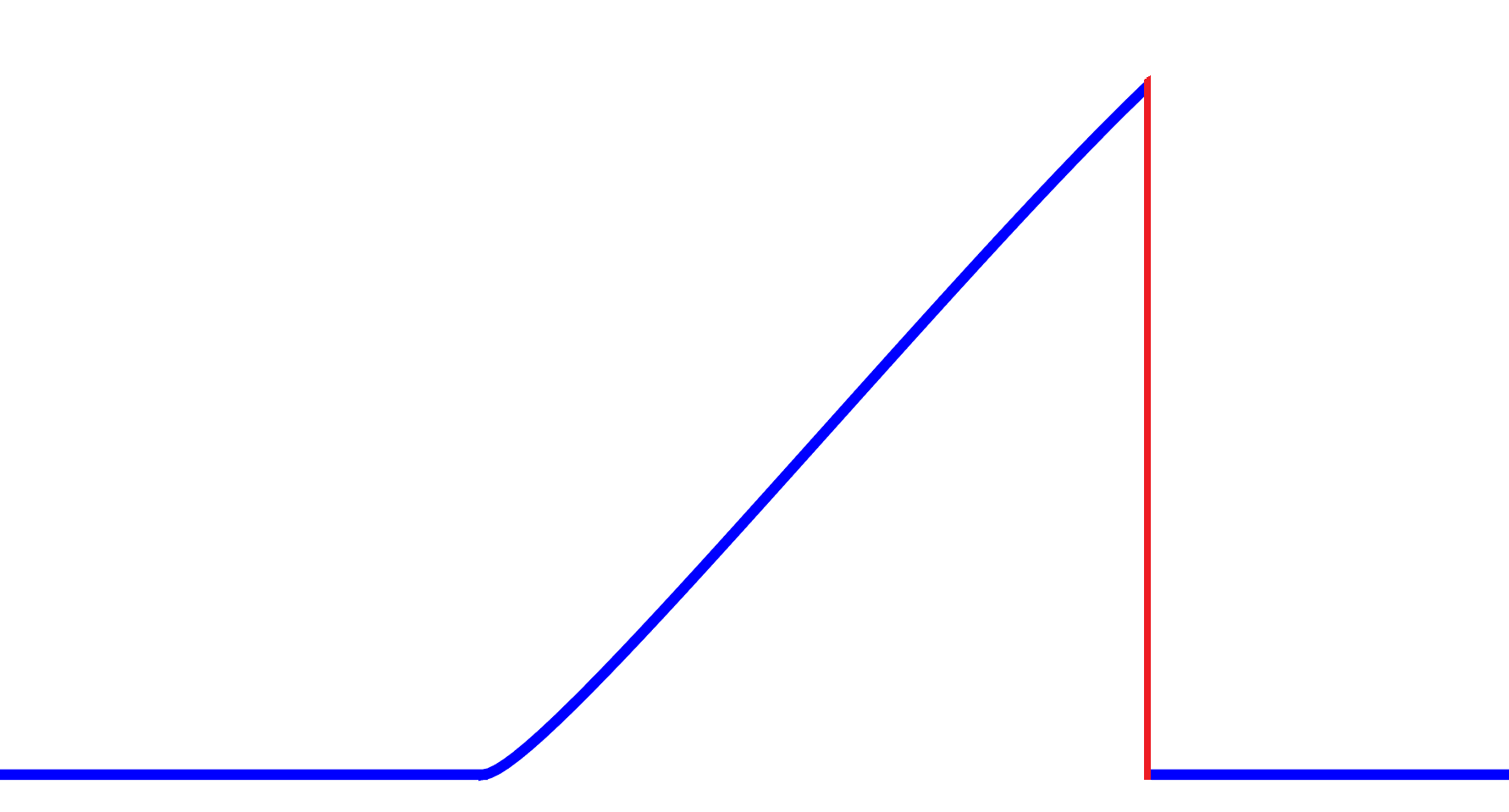}
\end{center}
\caption{ An illustration of the equal-area projection. }
\label{Proj Sketch}
\end{figure} 

At the core our approach is a parametric representation of the solution. The literature on parametric interpolation is extensive, however it is rarely utilized as a tool in numerical methods for differential equations. Given that we are concerned with high accuracy and smoothness of the solution curve, as opposed to the smoothness of the parametrization itself, we seek interpolation methods with high geometric continuity. Geometric continuity was first introduced in \cite{DeBoore}, where the authors matched function value, tangents and curvature at endpoints to obtain up to sixth order accuracy. Numerous other interpolation techniques can be employed to achieved desired characteristics, such as matching prescribed arc length, see \cite{ArcLength,farouki1}, or minimizing the curvature variation energy in \cite{CurvMin}, or the strain energy in \cite{StrainEnergy}.  Given that we are interested in obtaining high-order numerical schemes, we focus on parametric interpolation methods which emphasize accuracy.

In the homogeneous case of (\ref{Cauchy}), conservative methods are desirable, therefore we are interested in applying the area-preserving method developed in \cite{mcgregor2019area}. Here the authors construct a family of exactly area-preserving parametric Hermite polynomials which are fifth order accurate, one order higher than the standard parametric Hermite cubic. The conservative nature of these interpolants are particularly desirable in the homogeneous setting since the area change can only occur if there is flux over the boundary of the computation domain.

The paper is organized as follows: In Section \ref{PIF} we discuss the application of parametric interpolation to homogeneous 1-D scalar conservation laws. Specifically, Subsection \ref{AreaInterpolation} presents a brief overview of Bezier cubic interpolation and the area-preserving interpolation discussed in \cite{mcgregor2019area}. In Subsection \ref{1DScalar} we show how the methods from Section \ref{AreaInterpolation} can be applied to scalar conservation laws. Here the equal-area principle is discussed and we present a simple example showing that sixth order accuracy on shock position is obtained through the use of the interpolation framework in \cite{mcgregor2019area}. Sections \ref{NonHomCase} and \ref{Numerics} discuss the non-homogeneous problem, in particular these sections cover a modified equal-area principle and a shock propagation method which allows capturing shock position to high-order. Finally in Section \ref{Results}, full numerical results are presented with concluding remarks in Section \ref{Discussion}.

\section{Parametric interpolation framework for 1-D homogeneous scalar conservation laws}\label{PIF}

\subsection{Area-preserving parametric interpolation}\label{AreaInterpolation}
We begin by presenting a brief overview of the area-preserving B\'ezier interpolation discussed in \cite{mcgregor2019area}. The objective of this paper was as follows: given a planar parametric curve $\langle f(s), g(s) \rangle$, parametrized by $s\in[s_0,s_1]$, find a cubic B\'ezier polynomial defined by
\begin{equation}
\vec{B}(t)=\langle B_1(t) \, , \, B_2(t) \rangle=\vec{A}(1-t)^3+3\vec{C}_1(1-t)^2t+3\vec{C_2}(1-t)t^2+\vec{D}t^3, \quad \text{for $t\in[0,1]$},\label{Bezier}
\end{equation}
which satisfies
\begin{align*}
&\vec{B}(0)=\langle f(s_0) \, , \, g(s_0)\rangle, \quad \vec{B}(1)=\langle f(s_1) \, , \, g(s_1)\rangle,\nonumber\\
&\vec{B}'(0)=r_1\langle f'(s_0) \, , \, g'(s_0)\rangle=r_1\vec{\alpha},\quad \text{for some } r_2\in\mathbb{R}\nonumber\\
&\vec{B}'(1)=r_2\langle f'(s_1) \, , \, g'(s_1)\rangle=r_2\vec{\beta},\quad \text{for some } r_2\in\mathbb{R} , \text{and finally}\nonumber\\
&\int_0^1B_2(\tau)B_1'(\tau)\text{d}\tau=\int_{s_0}^{s_1}g(\tau)f'(\tau)\text{d}\tau.
\end{align*}
The coefficients $\vec{A},\vec{C}_1,\vec{C}_2$ and $\vec{D}$ are extracted from the functions $f$ and $g$ above, however, an additional degree of freedom remains.  After translating the data to the origin ($\vec{A}=0$), the integral condition above leads to a relation in terms of $r_1$ and $r_2$, given by
\begin{equation}
\frac{r_1r_2}{60}(\vec{\alpha}\times\vec{\beta} )+\frac{r_1}{10}(\vec{D}\times\vec{\alpha})+\frac{r_2}{10}(\vec{\beta}\times\vec{D})+\frac{D_1D_2}{2}=\int_{s_0}^{s_1}g(\tau)f'(\tau)\text{d}\tau-g(s_0)((f(s_1)-f(s_0)),\label{AreaPres}
\end{equation}
where $\times$ denotes the scalar vector product $\vec{\alpha}\times\vec{\beta}=\alpha_1\beta_2-\beta_1\alpha_2$. The main result in \cite{mcgregor2019area} proves that the interpolation is fifth order accurate in the $L^{\infty}$ norm provided the parameters $r_1$ and $r_2$ satisfy the area condition (\ref{AreaPres}) and an appropriate decay rate ( as $||\vec{D}-\vec{A}||\rightarrow 0 $), with the interpolation reducing to fourth order if the curvature vanishes somewhere within the domain of interpolation.  It is important to note that this result assumes the portion of the parametric curve being interpolated is small enough such that it may be represented by some function $\langle x,\tilde{f}(x)\rangle$ after an appropriate rotation.  The simplest fifth order area-preserving cubic B\'ezier interpolation of the function $\langle x,\tilde{f}(x)\rangle$ over the interval $x\in[0,h]$ is given by taking $r_1=h$ and solving for $r_2$ in (\ref{AreaPres}). This is a natural choice since the resulting curve can be viewed as a perturbed cubic Hermite polynomial. Given the data $\langle x,\tilde{f}(x)\rangle$ for $x\in[0,h]$ and taking the non-area preserving choice $r_1=r_2=h$ yields
\begin{align}
 \vec{B}_{H}(t)=\langle th, \tilde{f}(0)+\tilde{f}'(0)th&+\left(\frac{3(\tilde{f}(h)-\tilde{f}(0))-2\tilde{f}(0)h-\tilde{f}(h)h}{h^2}\right)(th)^2\nonumber\\
 &+\left(\frac{2(\tilde{f}(0)-\tilde{f}(h))+2\tilde{f}'(0)h+\tilde{f}(h)h}{h^3}\right)(th)^3 \rangle\label{BezierHermite}.
\end{align}
Defining $x=th$, we see that (\ref{BezierHermite}) is exactly a parametrization of the standard cubic Hermite polynomial. Therefore, as seen in Theorem 2.7 of \cite{mcgregor2019area}, when we take $r_1=h$ and let $r_2$ satisfy the area condition (\ref{AreaPres}) we obtain $r_2=h+\mathcal{O}(h^3)$, which implies the area-preserving cubic B\'ezier can be written as $\vec{B}(t)=\vec{B}_{H}(t)+\mathcal{O}(h^3)$, a perturbation from the cubic Hermite $\vec{B}_{H}(t)$ provided $h$ is small. We note that all of the fifth order area-preserving cubic B\'ezier's described in Theorem 2.7 of \cite{mcgregor2019area} can also be viewed as a perturbed cubic Hermite polynomial, the example given here is the simplest. If the provided curve $\langle x,\tilde{f}(x)\rangle$ for $x\in[0,h]$ is given in terms of a different parametrization with left and right tangents $\vec{\alpha}=\langle \alpha_1 , \alpha_2 \rangle$ and $\vec{\beta}=\langle \beta_1 , \beta_2 \rangle$ respectively, then taking $r_1=\frac{h}{\alpha_1}$ and $r_2=\frac{h}{\beta_1}$ yields the same cubic Hermite curve as (\ref{BezierHermite}). For more details on area-preserving cubic B\'ezier interpolation we refer the reader to \cite{mcgregor2019area}. Next we provide details on how to apply this parametric interpolation framework to homogeneous scalar conservation laws in one space dimension.

\subsection{Application to 1-D scalar conservation laws.}\label{1DScalar}

In this section we focus on the homogeneous case of (\ref{Cauchy}), specifically
\begin{equation}
\begin{cases}
u_t+(F(u))_x=0\label{CauchyHOM}\\
u(x,0)=g(x),
\end{cases}
\end{equation}
where $g$ is piecewise smooth and $F$ is both smooth and uniformly convex. We note that sufficient smoothness is only a requirement to obtain high-order convergence of the numerical methods discussed in this section.

Our first objective here is to justify that the parametric curve $\langle x(s,t), u(s,t) \rangle$ given by solving (\ref{CharEq}) can be used to obtain the correct weak solution. We also must show that we can obtain the required data from (\ref{CharEq}) to construct the area-preserving parametric polynomials described in \cite{mcgregor2019area}.

The Cauchy problem (\ref{CauchyHOM}) yields the characteristic equations
\begin{align}
\dot{x}&=F'(u) \label{CharEqHom}\\
\dot{u}&=0, \nonumber
\end{align}
which can be solved exactly, with
\begin{align}
x(x_0,t)&=x_0+F'(g(x_0))t\nonumber\\
u(x_0,t)&=g(x_0),\label{HomParCurve}
\end{align}
where $g(x)$ is the given initial condition of (\ref{CauchyHOM}). Written as a planar curve parametrized by $x_0$, we have the solution to (\ref{CharEqHom}) is given by $\langle x_0+F'(g(x_0))t \, , \, g(x_0) \rangle$. The method of characteristics, as discussed in \cite{Evans,LAX}, says that until the formation of a shock at time $t=t^*$ the strong solution of (\ref{CauchyHOM}) is given by the curve $\langle x_0+F'(g(x_0))t, g(x_0) \rangle $. Additionally, even once a shock has formed, the portions of the weak solution which on either side of the shock are also given (\ref{HomParCurve}). Therefore, if we are able to obtain the correct location of discontinuities in weak solutions of (\ref{CauchyHOM}), and everywhere else in the solution is given by the curve (\ref{HomParCurve}), then we must have the correct weak solution. We begin our discussion of weak solutions by addressing shock position and the equal-area principle.

The equal-area principle for scalar 1-D scalar conservation laws with convex flux functions has been utilized to obtain the location of shocks by many, see \cite{LevFinite, Equal}. However, a proof of its validity for all times $t\geq t^*$ is generally neglected. Therefore for completeness we include a proof that the equal-area principle is equivalent to the Rankine-Hugoniot condition in this setting, but first we must first introduce what we mean by an equal-area curve.

\begin{definition}
We say $\langle x(s), u(s) \rangle$ is an equal-area curve from $s_0$ to $s_1$ provided $x(s_0)=x(s_1)$ and
\begin{equation*}
\int_{s_0}^{s_1}u(s)x'(s)\text{d}s=0.
\end{equation*}
\end{definition}
Using this definition we are able to present the following Theorem on the equal-area principle.

\begin{theorem}\label{Thm1}
Suppose $\langle x(s,t),u(s) \rangle$ is a parametrization of (\ref{HomParCurve}), then \\ $\langle x(s,t),u(s) \rangle$ is an equal-area curve between $s_1(t)$ and $s_2(t)$ for $t\in [t^*,T]$ if and only if $\frac{d}{dt}x(s_1(t),t)=\frac{d}{dt}x(s_2(t),t)=\frac{F(u(s_1(t)))-F(u(s_2(t)))}{u(s_1(t))-u(s_2(t))}$ for $t\in [t^*,T]$.
\end{theorem}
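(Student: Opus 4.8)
The plan is to recast both halves of the equal-area condition as scalar functions of $t$ and differentiate them. Writing $\partial_s x$ for the $s$-derivative, I would introduce the endpoint gap and signed area
\[
G(t) = x(s_1(t),t) - x(s_2(t),t), \qquad A(t) = \int_{s_1(t)}^{s_2(t)} u(s)\,\partial_s x(s,t)\,\mathrm{d}s .
\]
By the definition preceding the theorem, the assertion that $\langle x(s,t),u(s)\rangle$ is an equal-area curve between $s_1(t)$ and $s_2(t)$ for every $t\in[t^*,T]$ is exactly the pair of conditions $G\equiv 0$ and $A\equiv 0$ on $[t^*,T]$. The only structural facts I need about the explicit curve (\ref{HomParCurve}) are that $\partial_t x(s,t)=F'(u(s))$ is independent of $t$, and hence the mixed partial satisfies $\partial_t\partial_s x(s,t)=\frac{\mathrm{d}}{\mathrm{d}s}F'(u(s))$.

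Next I would differentiate. For $G$ this is immediate: $\dot G(t)=\frac{\mathrm{d}}{\mathrm{d}t}x(s_1,t)-\frac{\mathrm{d}}{\mathrm{d}t}x(s_2,t)$, so the first equality on the right-hand side of the theorem is precisely $\dot G\equiv 0$. For $A$ I would apply the Leibniz rule for a moving-boundary integral and then integrate by parts. Writing $\sigma_i(t)=\frac{\mathrm{d}}{\mathrm{d}t}x(s_i(t),t)$ for the endpoint speeds and using the chain rule $\sigma_i=\partial_s x(s_i,t)\dot s_i+F'(u(s_i))$ to eliminate $\partial_s x(s_i,t)\dot s_i$, together with the mixed-partial identity and $\int_{s_1}^{s_2}u'(s)F'(u(s))\,\mathrm{d}s=F(u(s_2))-F(u(s_1))$, the boundary terms of the form $u(s_i)F'(u(s_i))$ cancel in pairs and I expect to reach
\[
\dot A(t) = \sigma_2(t)\,u(s_2(t)) - \sigma_1(t)\,u(s_1(t)) - \bigl(F(u(s_2(t))) - F(u(s_1(t)))\bigr).
\]

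With these two formulas the equivalence follows in both directions. For the forward implication, if the curve is equal-area for all $t$, then $G\equiv 0$ forces $\dot G\equiv 0$, giving $\sigma_1=\sigma_2=:\sigma$, while $A\equiv 0$ forces $\dot A\equiv 0$; substituting $\sigma_1=\sigma_2=\sigma$ yields $\sigma\bigl(u(s_2)-u(s_1)\bigr)=F(u(s_2))-F(u(s_1))$, and since the two states differ across a genuine shock this is exactly the Rankine-Hugoniot speed, establishing the right-hand side. Conversely, if the Rankine-Hugoniot condition holds, then $\sigma_1=\sigma_2$ gives $\dot G\equiv 0$ and the common speed being the Rankine-Hugoniot speed gives $\dot A\equiv 0$; invoking the data at shock formation, where $s_1(t^*)=s_2(t^*)$ and hence $G(t^*)=A(t^*)=0$, integration in $t$ returns $G\equiv 0$ and $A\equiv 0$, i.e. the equal-area property on all of $[t^*,T]$.

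The main obstacle I anticipate is bookkeeping rather than conceptual: correctly handling the $t$-dependent limits $s_1(t),s_2(t)$ in the Leibniz rule and, crucially, distinguishing the partial contribution $\partial_s x(s_i,t)\dot s_i$ from the total endpoint speed $\sigma_i$, so that the flux terms cancel cleanly. The one genuine assumption worth making explicit is the shock-formation initial condition $s_1(t^*)=s_2(t^*)$, which is what promotes the differentiated (Rankine-Hugoniot) identity back to the full equal-area equalities in the converse direction; without it the right-hand side determines the curve only up to a constant area offset.
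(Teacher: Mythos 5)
Your proposal is correct and follows essentially the same route as the paper: differentiate the moving-boundary area integral via the Leibniz rule, integrate by parts using $x_{st}(s,t)=\tfrac{d}{ds}F'(u(s))$ together with the chain rule at the endpoints to extract the Rankine--Hugoniot speed, and for the converse integrate back from the data at shock formation. The only (minor) divergence is in the converse's base case: the paper also treats the possibility $s_1(t^*)<s_2(t^*)$, showing either that $x_s$ vanishes identically there (so the area is still zero) or that a sign change of $x_s$ would contradict the definition of $t^*$, whereas you assume $s_1(t^*)=s_2(t^*)$ outright --- a restriction you correctly flag as the assumption needed to recover the full equal-area equalities from the differentiated identities.
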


\begin{proof}
Suppose $\langle x(s,t),u(s) \rangle=\langle s + F'(g(s))t , g(s) \rangle$ solves the system of characteristic equations (\ref{CharEqHom}) and is an equal-area curve between $s_1(t)$ and $s_2(t)$ on $t\in[t^*,T]$. This implies
\begin{equation}
\frac{d}{dt}\int_{s_1(t)}^{s_2(t)}u(s)x_s(s,t)\text{d}s=0,
\end{equation}
on $t\in[t^*,T]$. Expanding the above integral leads to
\begin{equation*}
u(s_2(t))x_s(s_2(t),t)s_2'(t)-u(s_1(t))x_s(s_1(t),t)s_1'(t)+\int_{s_1(t)}^{s_2(t)}u(s)x_{st}(s,t)\text{d}s=0.
\end{equation*}
The chain rule yields $\frac{d}{dt}x(s(t)(t),t)=x_s(s(t),t)s'(t)+x_t(s(t),t)$ and using equal-area curve property that $\frac{d}{dt}x(s_1(t),t)=\frac{d}{dt}x(s_2(t),t)$, the above equation can be rewritten as
\begin{align}
\frac{d}{dt}x(s_1(t),t)\left(u(s_2(t))-u(s_1(t))\right)+u(s_1(t))x_t(s_1(t),t)-u(s_2(t))x_t(s_2(t),t)&\nonumber\\
+\int_{s_1(t)}^{s_2(t)}u(s)x_{st}(s,t)\text{d}s=0&\label{EADer}.
\end{align}

Using $\langle x(s,t),u(s) \rangle=\langle s + F'(g(s))t , g(s) \rangle$ allows us to compute the integral term contained in (\ref{EADer}). A substitution and integration by parts yields
\begin{align}
\int_{s_1(t)}^{s_2(t)}u(s)x_{st}(s,t)\text{d}s&=\int_{s_1(t)}^{s_2(t)}g(s)F''(g(s))g'(s)\text{d}s\nonumber\\
&=\left(u(s)F'(u(s))-F(u(s))\right)\biggr|_{s=s_1(t)}^{s_2(t)}.\label{FluxInt}
\end{align}
Subbing this into (\ref{EADer}) and replacing $x_t(s(t),t)=F'(u(s(t))$ yields
\begin{align*}
\frac{d}{dt}x(s_1(t),t)\left(u(s_2(t))-u(s_1(t))\right)+F(u(s_1(t)))-F(u(s_2(t)))=0,
\end{align*}
which implies that the position of the equal-area curve moves at the correct Rankine-Hugoniot speed.

To prove the converse we start by assuming we have an isolated shock formed at time $t=t^*$ and position $x(s_1(t^*),t^*)=x(s_2(t^*),t^*)$  and $\frac{d}{dt}x(s_1(t),t)=\frac{d}{dt}x(s_2(t),t)=\frac{F(u(s_1(t)))-F(u(s_2(t)))}{u(s_1(t))-u(s_2(t))}$ for $t\in [t^*,T]$. Repeating the above calculations we obtain that 
\begin{equation}
\frac{d}{dt}\int_{s_1(t)}^{s_2(t)}u(s)x_s(s,t)\text{d}s=0.
\end{equation} 
 To show that this is an equal-area curve we must prove that $\displaystyle \int_{s_1(t)}^{s_2(t)}u(s)x_s(s,t)\text{d}s=0$. At $t=t^*$ there are two possibilities, first we could have that $s_1(t^*)=s_2(t^*)$, which would prove the desired result. The other possibility is that $s_1(t^*)<s_2(t^*)$. If $1+F''(g(s))g'(s)t^*=0$ everywhere in the interval $[s_1(t^*),s_2(t^*)]$ then we also have an equal-area curve. The only possibility for us to not have an equal-area curve is if $1+F''(g(s))g'(s)t\neq0\Rightarrow x_s(s,t^*)\neq0$ somewhere in $s\in[s_1(t^*),s_2(t^*)]$. Since $x(s_1(t^*))=x(s_2(t^*))$ this would imply $x_s(s,t^*)>0$ somewhere in the interval and $x_s(s,t^*)<0$ somewhere in the interval, which implies that a shock must have emerged at an earlier time, which contradicts the definition of $t^*$. This completes the proof.
\end{proof}


We have now shown that the parametric curve given by (\ref{HomParCurve}) can be used to construct the correct weak solution of (\ref{CauchyHOM}) provided an appropriate projection is used. Next we show how to extract the data from (\ref{HomParCurve}) to construct the area-preserving parametric interpolation of \cite{mcgregor2019area}.

Consider the parametric curve parametrized by $x_0$ given by $\langle x(x_0,t) , u(x_0) \rangle=\langle x_0+F'(g(x_0))t , g(x_0) \rangle$.  Recall that utilizing the fifth order interpolation method of \cite{mcgregor2019area} requires endpoint values, tangents directions and the parametric area for each interpolant. As we will be performing a piecewise interpolation of this curve we partition computation domain for $x_0$ into $n$ subintervals $[x_i,x_{i+1}]$, for $i=1,..n$. In the $i^{th}$ subinterval at time $t=\tau$ we construct the $i^{th}$ interpolant $\langle x_i(s)  ,  y_i(s)\rangle$ satisfying
\begin{align*}
&\langle x_i(0) , y_i(0)\rangle=\langle x_i+F'(g(x_i))\tau,g(x_i)\rangle,\\
&\langle x_i(1)  ,  y_i(1)\rangle=\langle x_{i+1}+F'(g(x_{i+1}))\tau  ,  g(x_{i+1})\rangle,\\
&\langle x_i'(0) ,  y_i'(0)\rangle=r_{1_i}\langle 1+F''(g(x_i))g'(x_i)\tau  ,  g'(x_i)\rangle=r_{1_i}\vec{\alpha}_i,\quad \text{for some } r_{1_i}\in\mathbb{R}\\
&\langle x_i'(1) ,  y_i'(1)\rangle=r_{2_i}\langle 1+F''(g(x_{i+1}))g'(x_{i+1})\tau  ,  g'(x_{i+1})\rangle=r_{2_i}\vec{\beta}_i,\quad \text{for some } r_{2_i}\in\mathbb{R},\\
&\int_0^1y(s)x'(s)\text{d}s=\int_{x_i}^{x_{i+1}}g(s)(1+F''(g(s))g'(s)\tau)\text{d}s.
\end{align*}
From the above equations it is clear that accessing the endpoint values and tangents simply requires the evaluation of known functions given in (\ref{CauchyHOM}). We note that the final integral condition can be greatly simplified through a substitution and integration by parts to obtain
\begin{align*}
\int_0^1y(s)x'(s)\text{d}s&=\int_{x_i}^{x_{i+1}}g(s)\text{d}s\\
&+\tau\left(F'(g(x_{i+1}))g(x_{i+1})-F'(g(x_{i}))g(x_{i})+F(g(x_i))-F(g(x_{i+1}))\right).
\end{align*}
Therefore updating the integral data from time $t$ to $t+\Delta t$ simply requires an update of the second term, implying that integration is not required after the initial step. 

\subsection{Example 1}\label{Ex1}

In this example we consider the following Cauchy problem
\begin{equation}
\begin{cases}
u_t+uu_x=0\label{CauchyEx1}\\
u(x,0)=g(x),
\end{cases}
\end{equation}
where,
\begin{equation*}g(x)=
\begin{cases}
\sin(x), \quad \text{for $x\in[0,\pi]$}\\
0 \quad \text{otherwise}.
\end{cases}
\end{equation*} 

We begin by constructing the area-preserving interpolation of the parametric curve obtained from the characteristic equation and show that we obtain the convergence claimed in \cite{mcgregor2019area}. Then, we apply the equal-area principle discussed in Theorem \ref{Thm1} and show that the correct shock position is obtained. 

The characteristic equations associated with (\ref{CauchyEx1}) are
\begin{align}
\dot{x}&=u \label{CharEqEx1}\\
\dot{u}&=0, \nonumber
\end{align}
which yields the solution
\begin{equation}
\langle x(x_0,t),u(x_0) \rangle = \langle x_0+\sin(x_0)t, \sin(x_0) \rangle, \quad \text{for $x\in[0,\pi]$}.\label{Ex1Sln}
\end{equation}

For this simple example we are able to compute the equal-area solution by hand. A plot of the area-preserving parametric interpolation of (\ref{Ex1Sln}) is shown below in Figure \ref{SineEx} at times $t=0$, $t=1$ and $t=2$. The resulting weak solution is obtained by cutting out the overturn portion of the curve at the shock location.

\begin{figure}[!ht]
\begin{center}
\includegraphics[width=40mm,height=30mm]{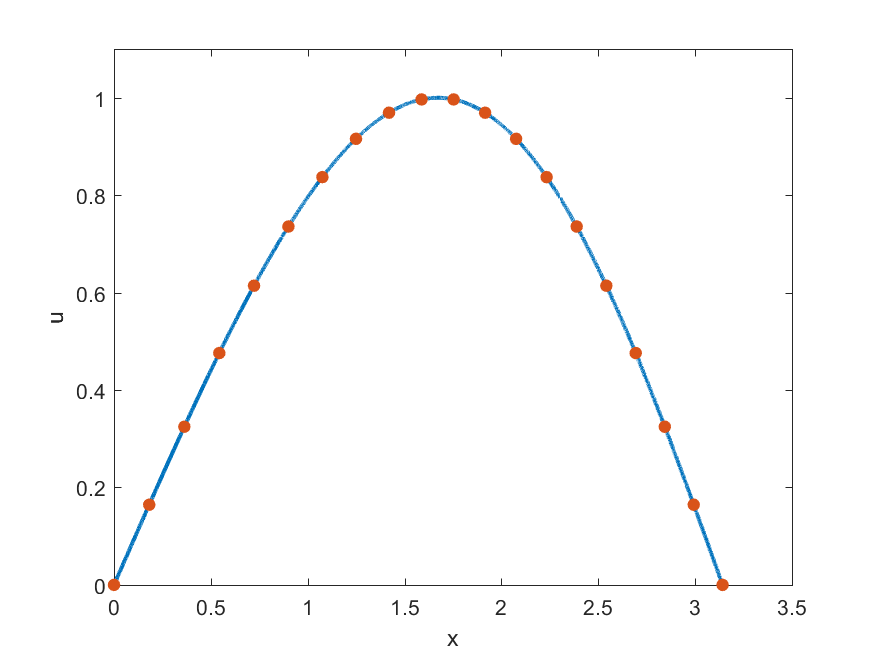}
\includegraphics[width=40mm,height=30mm]{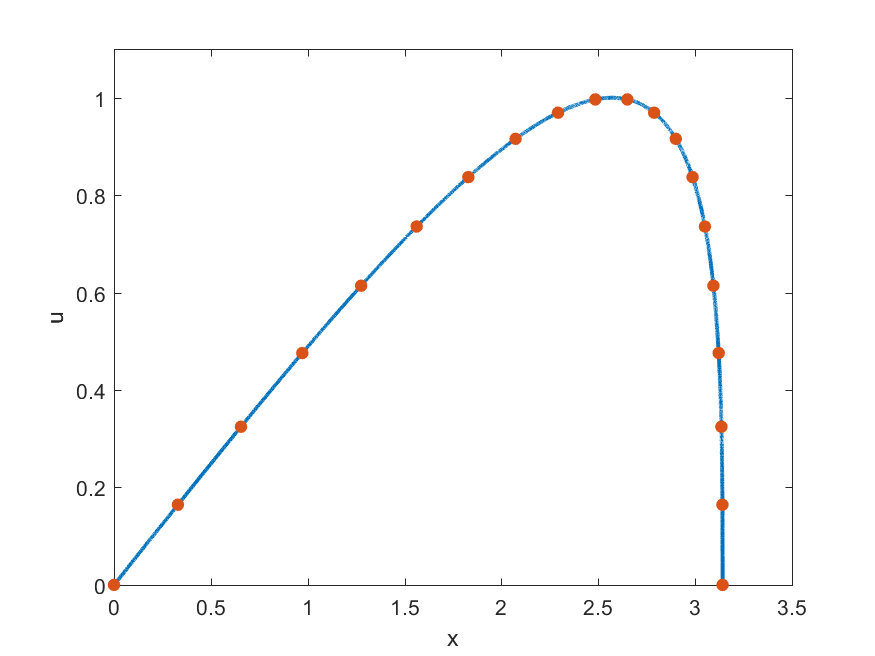}
\includegraphics[width=40mm,height=30mm]{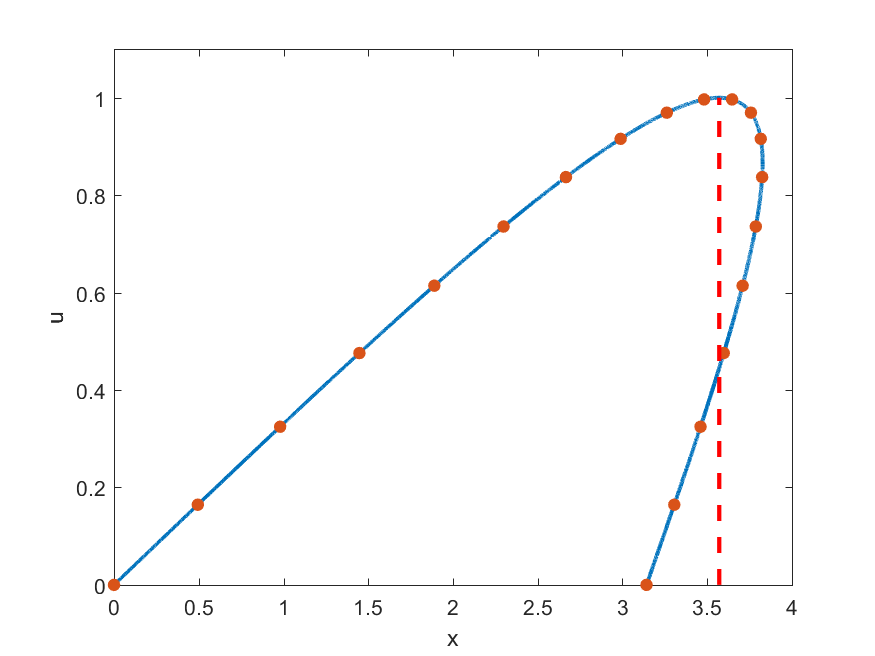}
\end{center}
\label{SineEx}
\caption{ Area-preserving interpolation of (\ref{Ex1Sln}) }
\end{figure} 

As we can see the curve eventually overturns in such a way that we know the right state of the shock will be height zero. With this information we search for the point in the parametrization $a(t)$ satisfying
\begin{equation*}
\int_{a(t)}^{\pi}\sin(s)(1+\cos(s)t)\text{d}s=0,
\end{equation*}
which, after some simplification, leads to the equation
\begin{equation*}
\frac{t\cos^2(a(t))}{2}+\cos(a(t))+\left(1-\frac{t}{2}\right)=0.
\end{equation*}
Some basic algebra tells us that $a(t)=\arccos\left(\frac{t-2}{t}\right)$, and that the position of the equal-area line is therefore given by $\displaystyle x(a(t),t)=\arccos\left(\frac{t-2}{t}\right)+2\sqrt{t-1}$. The height of the equal-area line is given by $\displaystyle u(a(t))=\sin\left(\arccos\left(\frac{t-2}{t}\right)\right)=2\frac{\sqrt{t-1}}{t}$. To see that this agrees with the Rankine-Hugoniot condition we should have that $\displaystyle \frac{d}{dt}x(a(t),t)=\frac{F(u(a(t))}{u(a(t))}=\frac{u(a(t))}{2}=\frac{\sqrt{t-1}}{t}$. Differentiating $\arccos\left(\frac{t-2}{t}\right)+2\sqrt{t-1}$ with respect to $t$ shows this is indeed the case. Therefore the equal-area line moves at Rankine-Hugoniot speed. Next we show how one constructs the area-preserving parametric polynomials for this example.

On each subinterval $[x_{i},x_{i+1}]\subset[0,\pi]$, for $i=1, \dots, n-1$ we use (\ref{Ex1Sln}) to construct our B\'ezier polynomial $\vec{B}_i(t)$,  (\ref{Bezier}). The coefficients $\vec{A}_i,\vec{\alpha}_i,\vec{\beta}_i$ and $\vec{D}_i$ are given by 
\begin{align*}
\vec{A}_i &= \langle x_i+\sin(x_i)t, \sin(x_i) \rangle\\
\vec{D}_i &= \langle x_{i+1}+\sin(x_{i+1})t, \sin(x_{i+1}) \rangle\\
\vec{\alpha}_i &= \langle 1+\cos(x_{i})t, \cos(x_i) \rangle\\
\vec{\beta}_i &= \langle 1+\cos(x_{i+1})t, \cos(x_{i+1}) \rangle,
\end{align*}
where $\vec{C_1}_i=\vec{A}_i+\frac{r_{1_i}\vec{\alpha}_i}{3}$, and $\vec{C_2}_i=\vec{D}_i-\frac{r_{2_i}\vec{\beta}_i}{3}$. Therefore all that is left is to determine $r_{1_i}$ and $r_{2_i}$. As discussed in \cite{mcgregor2019area} there is an entire family of pairs $(r_{1_i},r_{2_i})$ which are fifth order accurate and ensure exact area-preservation. As discussed in Section \ref{PIF} we take $r_{1_i}=\frac{D_{1_i}-A_{1_i}}{\alpha_1}$ and solve for $r_{2_i}$ using (\ref{AreaPres}).

The error in shock position at time $t=2$ is plotted in Figure \ref{SineExampleConv}. We expect to obtain sixth-order accuracy in the shock position since it is obtained from the integration of a fifth order accurate curve. As predicted, Figure \ref{SineExampleConv} shows that sixth order accuracy in the shock position is obtained.

\begin{figure}[!ht]
\begin{center}
\includegraphics[width=80mm,height=60mm]{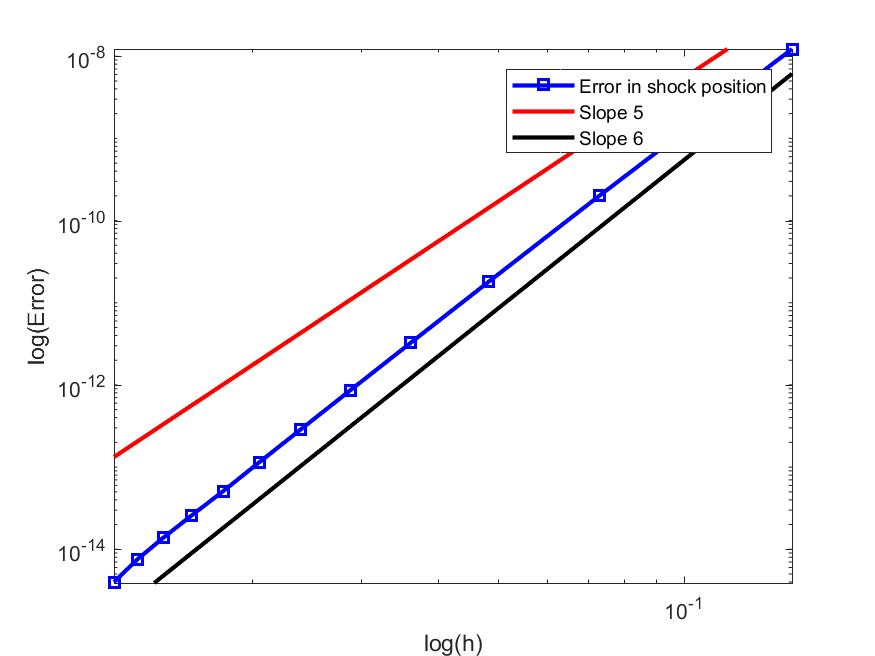}
\end{center}
\caption{ Area-preserving interpolation of (\ref{Ex1Sln}) }
\label{SineExampleConv}
\end{figure}

We see that in the homogeneous case the data for constructing the parametric interpolants is readily available, making our numerical scheme simple to implement. Moving to the source term case we lose a bit of this efficiency as, in general, we cannot obtain an analytical solution to the characteristic equations (\ref{CharEq}). On top of this the shock motion itself is far more complex, however, through the use of our parametric interpolation framework we are still able to capture the shock position to high-order, both temporally and spatially.

\section{The non-homogenous case}\label{NonHomCase}
In this section we apply the parametric framework discussed in Section \ref{AreaInterpolation} to the non-homogeneous setting. The Cauchy problem we are interested in solving is given by
\begin{equation}
\begin{cases}
u_t+(F(u))_x=Q(u,x,t)\label{Cauchy2}\\
u(x,0)=g(x),
\end{cases}
\end{equation}
where $g$ is piecewise smooth and both $F$ and $Q$ are smooth functions in their respective domains with $F$ uniformly convex. As discussed in Section \ref{Intro}, we apply the method of characteristics to obtain the system of equations
\begin{align}
\dot{x}&=F'(u) \label{CharEq2}\\
\dot{u}&=Q(u,x,t). \nonumber
\end{align}
The solution to (\ref{CharEq2}) can be represented as the parametric curve parametrized by $x_0$, $\langle x(x_0,t) , u(x_0,t) \rangle$, or in terms of $F$ and $Q$ as
\begin{equation}
\langle x(x_0,t) , u(x_0,t) \rangle=\langle x_0+\int_0^t F'(u)\text{d}\tau \, , \, g(x_0)+\int_0^t Q(u,x,\tau)\text{d}\tau \rangle.\label{ParNonHom}
\end{equation}

The distinction from the homogeneous case appears through the second component, $u$, which now varies in time. Also we notice that the system (\ref{CharEq2}) is a fully coupled system of ordinary differential equations, and therefore we are not able to come up with a general solution without knowing more about $F$ and $Q$. This implies that, in general, we will work directly with the curve (\ref{ParNonHom}) to extract the required data for the parametric interpolation, and thus introduce a temporal error that was not present in the homogeneous case. For now we focus our attention on the validity of the parametric framework before we discuss how to construct the parametric interpolants in this setting.

Just as in the homogeneous case, if the solution curve $\langle x(x_0,t) , u(x_0,t) \rangle$ obtained by solving (\ref{CharEq2}) does not overturn (remaining the graph of a single variable function) then the method of characteristics guarantees that this is indeed the correct solution to our Cauchy problem (\ref{Cauchy2}). Therefore we only need to worry about the case when discontinuities are present in the solution. In the homogeneous case we found that the equal-area principle provided us with the correct projection to obtain the desired weak solution of (\ref{CauchyHOM}), however, as seen in the next theorem this same approach does not work for general source terms $Q(u,x,t)$.

\begin{theorem}\label{NonHomEqualArea}
The equal-area principle applied to the parametric curve (\ref{ParNonHom}), in general, fails to predict the correct shock speed and thus cannot be used to find the appropriate weak solution of (\ref{Cauchy2}).
\end{theorem}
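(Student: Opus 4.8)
The plan is to prove Theorem \ref{NonHomEqualArea} by exhibiting the mismatch between the time-derivative of the equal-area condition and the Rankine-Hugoniot speed for a generic source term $Q$. The strategy mirrors the forward direction of the proof of Theorem \ref{Thm1}, but I would carry it through carefully to see precisely which terms fail to cancel once $Q \neq 0$. Since a single explicit counterexample suffices to establish a statement of the form ``in general, fails,'' I would actually aim for a two-pronged argument: first a general computation showing the obstruction term that appears, and then a concrete choice of $F$ and $Q$ for which that term is demonstrably nonzero.

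First I would set up the equal-area curve between $s_1(t)$ and $s_2(t)$ exactly as in Definition 1, imposing $x(s_1(t),t)=x(s_2(t),t)$ and $\int_{s_1(t)}^{s_2(t)} u(s,t)\,x_s(s,t)\,\mathrm{d}s = 0$, where now $u=u(s,t)$ genuinely depends on $t$ through (\ref{CharEq2}). Differentiating the integral condition in $t$ and applying the chain rule as in the derivation of (\ref{EADer}), the key new feature is that $\frac{\partial}{\partial t}\big(u(s,t)x_s(s,t)\big)$ now contains the extra term $u_t x_s = Q(u,x,t)\,x_s$, since $u_t = Q$ rather than $0$. I would track this term through the integration by parts that produced (\ref{FluxInt}) in the homogeneous case. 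In the homogeneous setting the integrand collapsed into an exact flux difference $u F'(u)-F(u)$ evaluated at the endpoints; with $Q \neq 0$ the analogous manipulation leaves behind a residual integral of the form $\int_{s_1(t)}^{s_2(t)} Q(u,x,t)\,x_s(s,t)\,\mathrm{d}s$ (plus possibly a boundary contribution), which does not in general reduce to an endpoint expression.

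The crux of the argument is then to compare the speed $\frac{d}{dt}x(s_1(t),t)$ forced by the equal-area condition against the true Rankine-Hugoniot speed $\frac{F(u(s_1))-F(u(s_2))}{u(s_1)-u(s_2)}$. I would show that these agree only when the residual integral vanishes, and that for a generic $Q$ it does not. The cleanest way to make ``in general fails'' rigorous is to pick a minimal explicit counterexample: for instance the inviscid Burgers flux $F(u)=u^2/2$ with a simple source such as $Q(u,x,t)=u$ or a constant $Q$, together with a Riemann-type or sinusoidal initial datum for which the equal-area projection and the Rankine-Hugoniot shock position can both be computed (at least to leading order in $t$ past $t^*$), and then verify that the two shock speeds differ. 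The Burgers-with-linear-source case is attractive because its characteristics can be integrated in closed form, so the comparison becomes an honest computation rather than an asymptotic estimate.

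The main obstacle I anticipate is controlling the residual integral $\int Q\,x_s\,\mathrm{d}s$: because $Q$ couples to both $u$ and $x$ along the characteristic flow, it has no reason to telescope into boundary data, and one must resist the temptation to treat the equal-area curve as static in $t$. The subtle point is that the endpoints $s_1(t),s_2(t)$ and the solution values $u(s_i(t),t)$ all evolve, so the ``area'' being preserved is itself changing shape; I would need to be careful that the failure is genuinely attributable to the source term rather than to a choice-of-parametrization artifact. Exhibiting a concrete flux/source pair where the discrepancy is a clean nonzero quantity is the safest route to sidestep this difficulty and make the ``in general'' claim airtight.
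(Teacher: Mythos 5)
Your proposal is correct and follows essentially the same route as the paper: differentiate the equal-area condition in time, observe that the source term contributes a residual $\int_{s_1}^{s_2} Q\,x_s\,\mathrm{d}s$ that does not telescope into endpoint data, and conclude the equal-area speed deviates from the Rankine--Hugoniot speed whenever that integral is nonzero. Your suggested concrete counterexample to pin down ``in general'' is exactly the role played by Example \ref{Ex2} immediately following the theorem in the paper.
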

\begin{proof}
Applying the same technique as in Theorem \ref{Thm1} we begin by supposing we have an equal-area curve between $s_1(t)$and $s_2(t)$. This implies that
\begin{equation*}
\frac{d}{dt}\int_{s_1(t)}^{s_2(t)}u(s,t)x_s(s,t)\text{d}s=0.
\end{equation*}
Computing the full derivative we obtain a very similar result as in the homogeneous case, with 
\begin{align*}
u(s_2(t),t)x_s(s_2(t),t)s_2'(t)-u(s_1(t),t)x_s(s_1(t),t)s_1'(t)&\\
+\int_{s_1(t)}^{s_2(t)}u(s,t)x_{st}(s,t)\text{d}s+\int_{s_1(t)}^{s_2(t)}Q(u,x,t)x_{s}(s,t)\text{d}s&=0.
\end{align*}
The major difference here is the addition of the second integral term which, in general, we are unable to simplify. Continuing with the same simplifications as in Theorem \ref{Thm1}, we set $\frac{d}{dt}x(s(t),t)-x_t(s(t),t)=x_s(s(t),t)s'(t)$.  Recalling that $x(s_1(t),t)=x(s_2(t),t)$ and using $\langle x(x_0,t),u(x_0,t)\rangle$ from (\ref{ParNonHom}) we obtain
\begin{align}
\frac{d}{dt}x(s_1(t),t)\left(u(s_2(t),t)-u(s_1(t),t)\right)+u(s_1(t),t)F'(u(s_1(t),t))&\\-u(s_2(t),t)F'(u(s_2(t),t))
+\int_{s_1(t)}^{s_2(t)}u(s,t)F''(u(s,t))u_s(s,t)\text{d}s&\nonumber\\+\int_{s_1(t)}^{s_2(t)}Q(u,x,t)x_{s}(s,t)\text{d}s=0&\nonumber\label{EADer2}.
\end{align}
The first of the two integrals is the same as (\ref{FluxInt}), therefore applying the same procedure as in Theorem \ref{Thm1} we obtain the final equation
\begin{align}
\frac{d}{dt}x(s_1(t),t)=\frac{F(u(s_1(t),t))-F(u(s_2(t),t))}{u(s_1(t),t)-u(s_2(t),t)}+\frac{\int_{s_1(t)}^{s_2(t)}Q(u,x,t)x_{s}(s,t)\text{d}s}{u(s_1(t),t)-u(s_2(t),t)}.\label{NonHomEAP}
\end{align}
Therefore, if at any point in time the integral term in (\ref{NonHomEAP}) is nonzero we obtain the incorrect shock speed from the equal-area principle, which completes the proof.
\end{proof}

To better understand why the equal-area principle fails in the non-homogeneous case we look to the following example.

\subsection{Example 2}\label{Ex2}
Here we consider the Cauchy problem
\begin{equation}
\begin{cases}
u_t+uu_x=-(u(1-u))^k\label{CauchyEx2}\\
u(x,0)=g(x),
\end{cases}
\end{equation}
where,
\begin{equation*}g(x)=
\begin{cases}
1, \quad \text{for $x\in[0,1]$}\\
0 \quad \text{otherwise},
\end{cases}
\end{equation*}
where $k$ is a parameter of our choosing.

Mirroring the technique used in the homogeneous setting, we take each point on the initial condition sketched in Figure \ref{BoxExIC} (both the top and sides of the rectangle) and flow them under the characteristic equations associated with (\ref{CauchyEx2}). In terms of the free parameter $k$, the characteristic equations are
\begin{align}
\dot{x}&=u \label{CharEqEx2}\\
\dot{u}&=-(u(1-u))^k. \nonumber
\end{align}
\begin{figure}[!ht]
\begin{center}
\includegraphics[width=50mm,height=30mm]{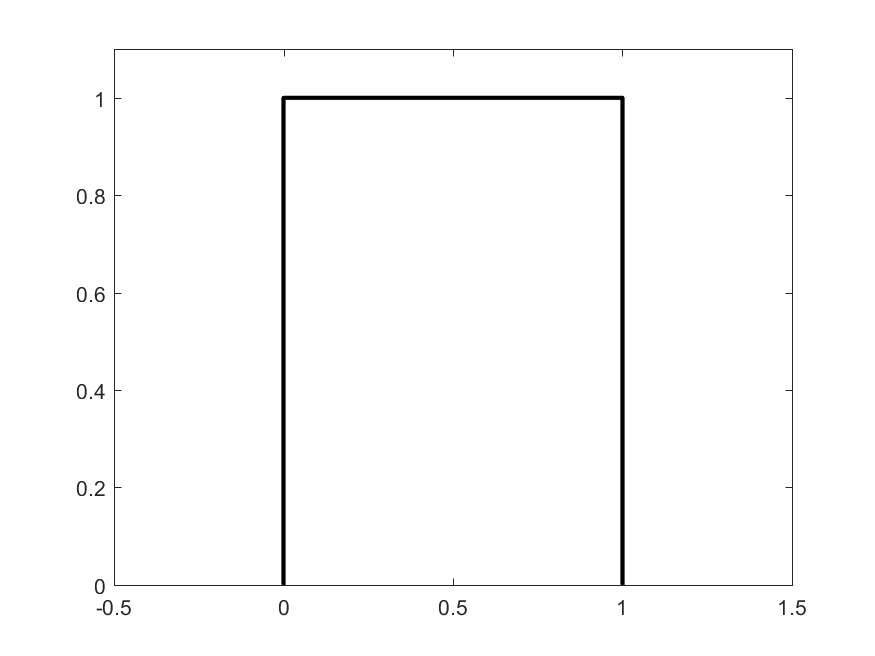}
\end{center}
\caption{ Initial condition for Example \ref{Ex2}. }
\label{BoxExIC}
\end{figure}
Under the dynamics governed by the system (\ref{CharEqEx2}) we flow each portion of the initial condition in Figure \ref{BoxExIC}  until $t=1$ for the parameter values $k=1, 1.5$ and $6$. The resulting curves are displayed in Figure \ref{BoxEx}. The rarefaction curve on the left portion of Figure \ref{BoxEx} is correct for the respective values of $k$, and the portion of the curve on top is also correct, however a shock should be present somewhere in the multi-valued portion of the curve. If we apply the equal-area principle it is clear, even by visual inspection, that each of these three curves will yield a different equal-area line and thus the equal-area principle will predict different shock positions at time $t=1$ for the different values of $k$. However, $u=1$ is a fixed point of its corresponding differential equation regardless of the value of $k$, therefore we know that the shock speed will be exactly $\frac{1}{2}$ until the rarefaction wave comes into contact with the shock, but this doesn't occur until $t=2$. Therefore the equal-area principle clearly is not capturing the motion of the shock in the non-homogeneous case correctly.

\begin{figure}[!ht]
\begin{center}
\includegraphics[width=100mm,height=60mm]{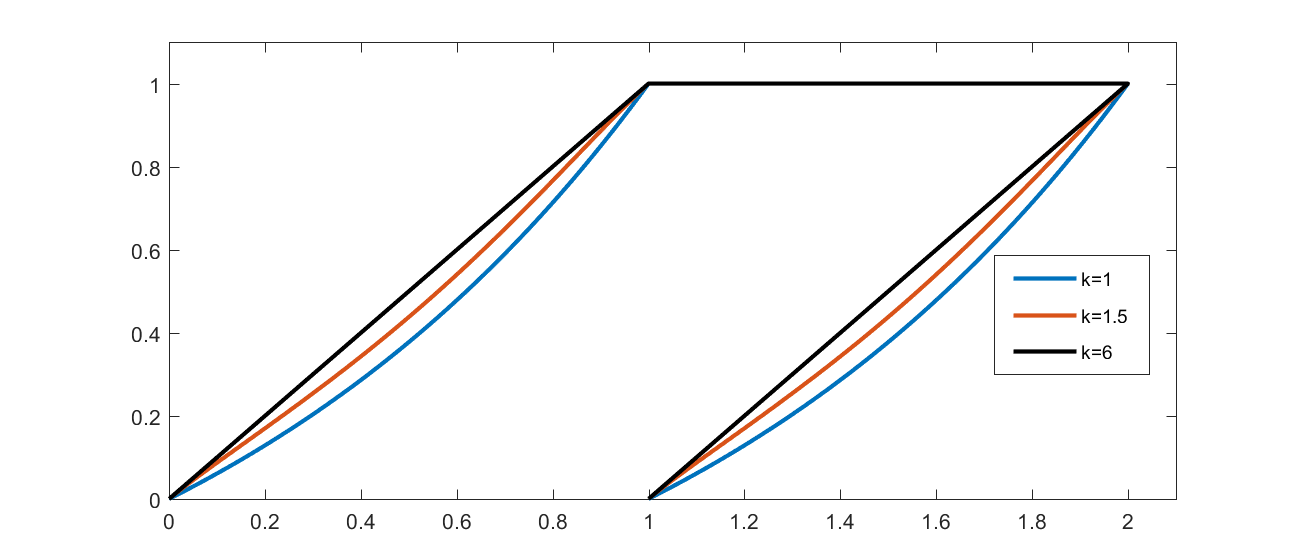}
\end{center}
\caption{Resulting curves after solving the system (\ref{CharEqEx2}) for $k=1, 1.5$ and $6$ until $t=1$.}
\label{BoxEx}
\end{figure} 

The main takeaway from Theorem \ref{NonHomEqualArea} and Example \ref{Ex2} is that shock formation and its subsequent motion need to be treated carefully and cannot be computed with the same techniques as in the homogeneous case. In the following section we present numerical methods for first detecting and locating where shocks are forming and then how to compute their motion to high spatial and temporal order. 

\section{Numerical methods for shock motion in the non-homogeneous case}\label{Numerics}

The numerical framework presented in this section will contain three distinct parts. First we have the standard characteristic flow given by solving (\ref{CharEq2}) on which we perform a high-order parametric interpolation. The second part of the method involves shock detection and initial shock positioning which will utilize an equal-area projection along with a splitting method. Finally we have the shock propagation method. For these methods to be effective we rely heavily on having a precise representation of the solution curve associated with (\ref{CharEq2}). We therefore begin this section by briefly discussing how to obtain high-order parametric interpolants in the non-homogeneous case.

Recalling again the characteristic equations and the given initial condition,

\begin{equation}\label{CharEq3}
 \left\{
\begin{array}{l}
     \dot{x}=F'(u)\\
     \dot{u}=Q(u,x,t)\\
x(0)=x_0\\
u(0)=g(x_0), 
\end{array}\right.
\end{equation}

we aim to construct solutions to (\ref{CharEq3}) at each point $x_0$ in the domain of $g(x_0)$ given in the Cauchy problem (\ref{Cauchy}). As mentioned in Section \ref{NonHomCase} we are unable to solve these equations exactly in the vast majority of cases, therefore we apply numerical methods on the equations (\ref{ParNonHom}),
\begin{equation*}
\langle x(x_0,t) , u(x_0,t) \rangle=\langle x_0+\int_0^t F'(u)\text{d}\tau \, , \, g(x_0)+\int_0^t Q(u,x,\tau)\text{d}\tau \rangle.
\end{equation*}
At each grid point $x_i$ we employ a suitable differential equation solver to obtain $\langle \tilde{x}(x_i,t) , \tilde{u}(x_i,t) \rangle$, for example, if the differential equations are not stiff, explicit Runge-Kutta methods can be utilized. Applying this idea on consecutive nodes $x_i$ and $x_{i+1}$ yields the endpoint values for us to perform our interpolation. To obtain our parametric interpolation to at least fourth order we also need information about the tangents along the solution curve. This data is obtained by expanding the system (\ref{CharEq3}) to include differential equations for the partials along the parametrization. Our extended set of characteristic equations therefore becomes
\begin{equation}\label{CharEqExt}
 \left\{
\begin{array}{l}
     \dot{x}=F'(u)\\
     \dot{x}_{x_0}=F''(u)u_{x_0}\\
     \dot{u}=Q(u,x,t)\\
\dot{u}_{x_0}=Q_u(u,x,t)u_{x_0}+Q_x(u,x,t)x_{x_0}\\
x(0)=x_0\\
x_{x_0}(0)=1 \\
u(0)=g(x_0) \\
u_{x_0}(0)=g'(x_0).
\end{array}\right.
\end{equation}
Solving (\ref{CharEqExt}) at initial points $x_i$ and $x_{i+1}$ until time $t$ yields the endpoint data $\tilde{x}(x_i,t), \tilde{u}(x_i,t), \tilde{x}(x_{i+1},t), \tilde{x}(x_{i+1},t)$ along with tangent the data $\tilde{x}_{x_0}(x_i,t)$, $\tilde{u}_{x_0}(x_i,t)$, $\tilde{x}_{x_0}(x_{i+1},t)$ and $\tilde{x}_{x_0}(x_{i+1},t)$. Using the cubic B\'ezier framework discussed in Section \ref{PIF} we only need to determine choices for $r_{1_i}$ and $r_{2_i}$ to generate the parametric polynomial interpolants. A fourth order accurate choice is simply using the parametric Hermite from Section \ref{AreaInterpolation},
\begin{align}
r_{1_i}&=\frac{\tilde{x}(x_{i+1},t)-\tilde{x}(x_{i},t)}{\tilde{x}_{x_0}(x_{i},t)}, \quad \text{provided $\tilde{x}_{x_0}(x_{i},t)\neq 0$}, \label{r1Herm}\\
r_{2_i}&=\frac{\tilde{x}(x_{i+1},t)-\tilde{x}(x_{i},t)}{\tilde{x}_{x_0}(x_{i+1},t)}, \quad \text{provided $\tilde{x}_{x_0}(x_{i+1},t)\neq 0$}. \label{r2Herm}
\end{align}
If either of the horizontal tangents vanish, then a simple rotation of the problem or a refinement of the grid is required.

\begin{example}\label{Ex3}
In this example we want to perform a high-order interpolation ( both in space and time ) of the particle path traced out by the solution to the initial value problem
\begin{equation}\label{Ex3IC}
 \left\{
\begin{array}{l}
     \dot{x}=u\\
     \dot{u}=\sin(x)u\\
     x(0)=0\\
     u(0)=\frac{1}{2}.
\end{array}\right.
\end{equation}
The chain rule tells us that $u(x(t))=\frac{3}{2}-\cos(x(t))$, which implies $\frac{d}{dt}x=\frac{3}{2}-\cos(x(t))$. With a bit of work one arrives at the solution
\begin{align}
x(t)&=2\arctan\left(\frac{\tan\left(\frac{\sqrt{5}}{4}t\right)}{\sqrt{5}}\right)\label{Ex3Sln}\\
u(t)&=\frac{3}{2}-\cos\left(2\arctan\left(\frac{\tan\left(\frac{\sqrt{5}}{4}t\right)}{\sqrt{5}}\right)\right)\nonumber.
\end{align}
The curve from (\ref{Ex3Sln}) at times $t=2.5$, $t=3$ and $t=5$ is plotted below in Figure \ref{FigODESolve}.

\begin{figure}[!ht]
\begin{center}
\includegraphics[width=40mm,height=40mm]{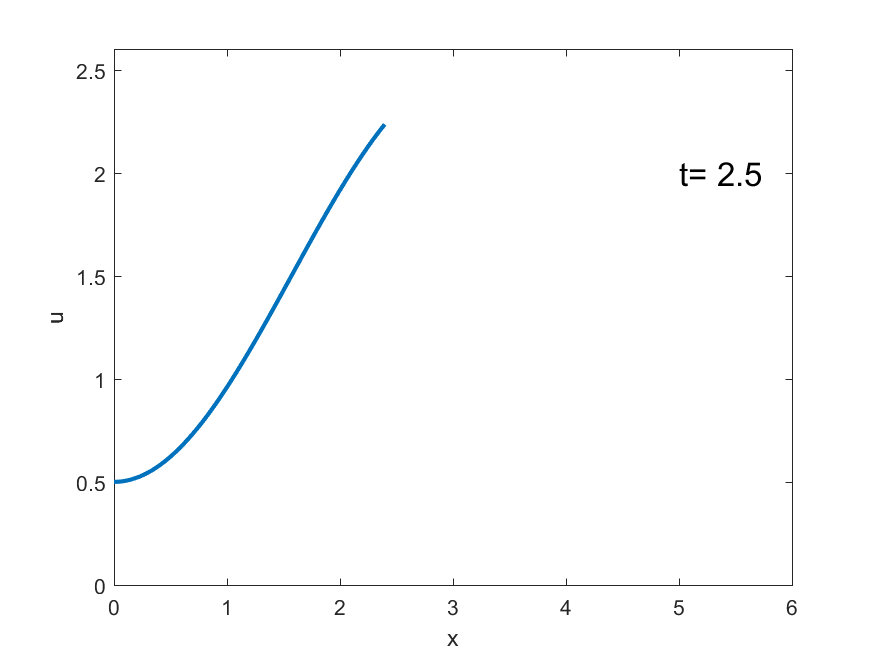}
\includegraphics[width=40mm,height=40mm]{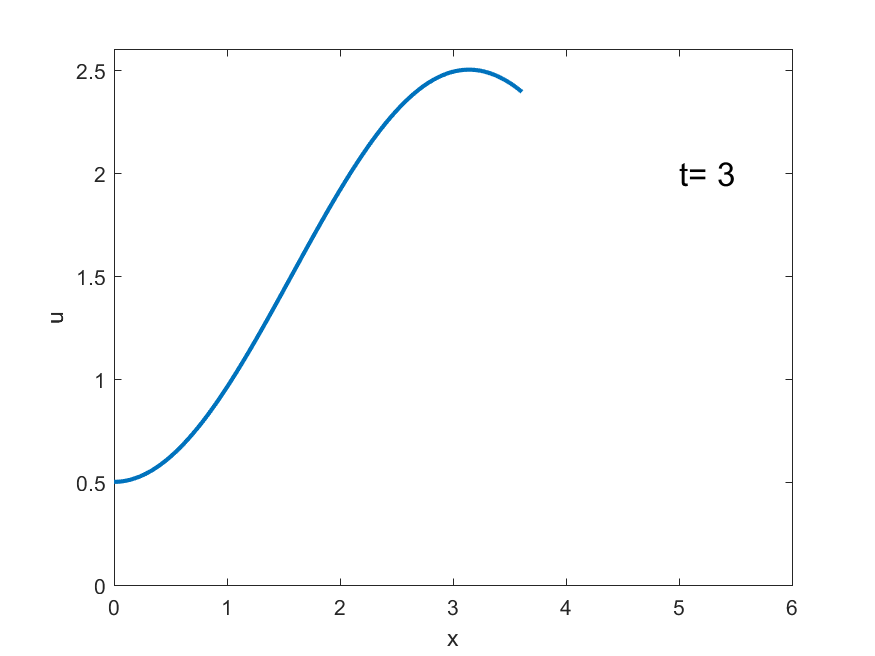}
\includegraphics[width=40mm,height=40mm]{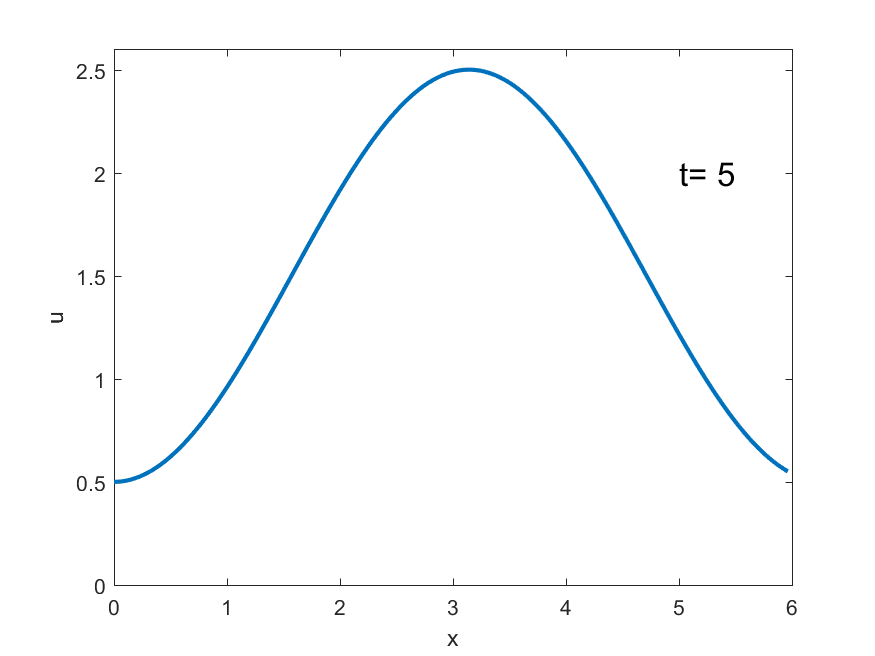}
\end{center}
\caption{ Particle path of the solution to the initial value problem (\ref{Ex3IC}) at times $t=2.5$, $t=3$ and $t=5$ respectively.}
\label{FigODESolve}
\end{figure} 
Using Runge-Kutta $4$ and a time step of $\Delta t =0.001$ we compute the particle position at time $t=5$ to fourteen digits of accuracy.  The data required to construct the parametric interpolants is given through the Runge-Kutta method applied to the extended system (\ref{CharEqExt}) associated with (\ref{Ex3IC}).  Constructing the parametric cubic Hermite polynomials discussed above we achieve fourth order convergence in space as seen in Figure \ref{HermSpatial}.
 
 \begin{figure}[!ht]
\begin{center}
\includegraphics[width=60mm,height=40mm]{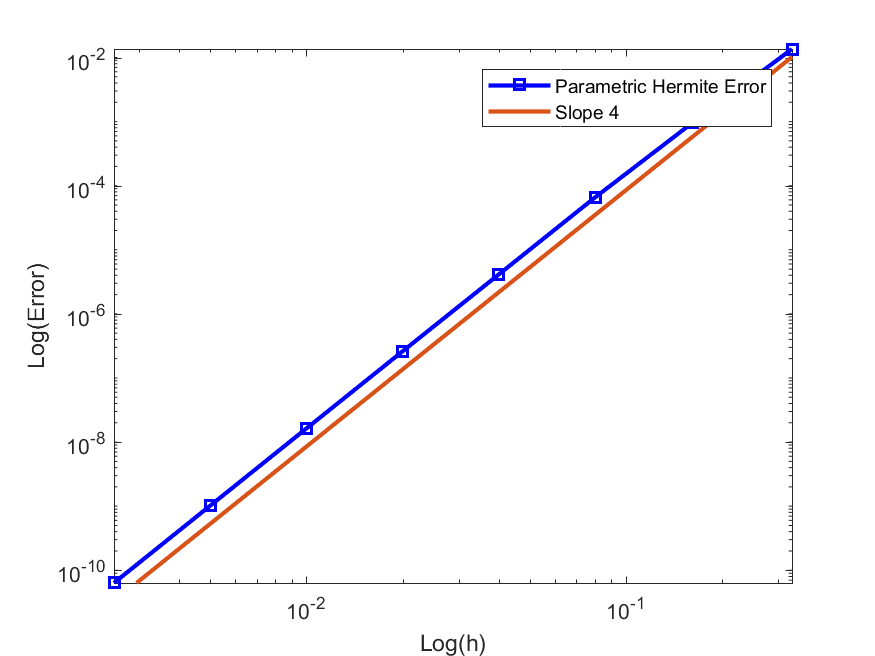}
\end{center}
\caption{ Spatial convergence  of particle path measured in the $L^{\infty}$ norm using parametric Hermite interpolation for example \ref{Ex3IC}).}
\label{HermSpatial}
\end{figure} 
\end{example}
The scheme used here is therefore fourth order in both space and time.  High-order spatial methods such as the area-preserving method of \cite{mcgregor2019area} or the sixth order curvature matching method of \cite{DeBoore} can also be applied here. A further discussion of these can be found in Section \ref{Results}.

Example \ref{Ex3} shows us that the idea of applying parametric interpolation to the extended characteristic equations (\ref{CharEqExt}) indeed allows us to construct high-order numerical approximations of  (\ref{ParNonHom}). The next step is showing how the constructed parametric interpolants can be used to correctly predict shock formation and initial shock location. 

The first piece of the puzzle is shock detection, which turns out to be very simple. Using the definition of shock formation discussed in Section \ref{Intro}, if at any point in the parametrization $s$ we have $x_s(s,t)<0$, then we know a shock has formed. Similarly we can check this on our parametric interpolants. If at time $t$ we have $\tilde{x}_{i_s}(s,t)>0$, but at time $t+\Delta t$ we have $\tilde{x}_{i_s}(s,t+\Delta t)<0$ within some interpolant $i$ at some point $s$ within its parametrization, then we predict a shock has formed between time $t$ and $t+\Delta t$. If we apply the equal-area principle on our curve $\tilde{x}(s,t+\Delta t)$, where $\langle \tilde{x}(s,t+\Delta t) , \tilde{u}(s,t+\Delta t) \rangle$ is given by parametric interpolation of the extended characterstic equations \ref{CharEqExt}, then we know by Theorem \ref{NonHomEqualArea} that the shock position is incorrect.  To overcome this we employ a modified equal-area principle. Recalling equations (\ref{ParNonHom}), we have that
\begin{align*}
x(s,t+\Delta t)&=x(s,t)+\int_t^{t+\Delta t}F'(u(s,\tau))\text{d}\tau, \quad \text{where $u$ is given by,}\\
u(s,t+\Delta t)&=u(s,t)+\int_t^{t+\Delta t}Q(u,x,\tau)\text{d}\tau.
\end{align*}
Instead of applying the equal-area principle on $\langle x(s,t+\Delta t), u(s,t+\Delta t)\rangle$, we fix the height of each particle, but still flow it under the correct horizontal motion given by solving the full system  for $\Delta t$ seconds. We therefore search for $s_1(t+\Delta t)$ and $s_2(t+\Delta t)$ such that
\begin{align}
&\int_{s_1(t+\Delta t)}^{s_2(t+\Delta t)}u(s,t)\frac{\partial}{\partial s}\left(x(s,t)+\int_{t}^{t+\Delta t}F'(u(s,\tau)\text{d}\tau\right)\text{d}=0, \quad \text{with,}\label{ModEA}\\
&x(s_1(t+\Delta t),t+\Delta t)=x(s_2(t+\Delta t),t+\Delta t).\nonumber
\end{align}
Once the shock position is found, we replace the overturned curve with a jump at $x(s_1(t+\Delta t ), t+\Delta t)$. Applying the vertical flow at this stage maps $u(s,t)\rightarrow u(s,t+\Delta t)$ which yields the left shock state $u(s_1(t+\Delta t), t+\Delta t)$ and right state $u(s_2(t+\Delta t), t+\Delta t)$.  It is important to note that the true shock is located somewhere in the multivalued region, say $x(s_1^*,t+\Delta t)=x(s_2^*,t+\Delta t)$, for some parameter values $s_1^*$ and $s_2^*$. The method of characteristics guarantees that the shock states must be given by $u(s_1^*,t+\Delta t)$ on the left and $u(s_2^*,t+\Delta t)$ on the right, where $\langle x(s_1^*,t+\Delta t), u(s_1^*,t+\Delta t) \rangle$ and $\langle x(s_2^*,t+\Delta t), u(s_2^*,t+\Delta t) \rangle$ are given by solving (\ref{CharEq}). Therefore, since our proposed method works directly with the characteristic equations the main source of error will come from the shock location itself.  The following Theorem proves that the one step error of (\ref{ModEA}) is second order accurate in time.

\begin{theorem}\label{ModEqualAreaThm}
Suppose $u(x,t)$ is a weak solution of (\ref{Cauchy}) with an isolated shock at position $x^*(t)$ with left state $u_L(t)$ and right state $u_R(t)$. Then, if at time $t$ the parametric curve given by solving (\ref{CharEq}) is an equal-area curve about $x^*(t)$, then the modified equal-area principle predicts the shock position at time $t+\Delta t$ with second order accuracy in time.
\end{theorem}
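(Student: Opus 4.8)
The plan is to compare the Taylor expansions in $\Delta t$ of the true shock trajectory $x^*(t+\Delta t)$ and of the position $\tilde{x}(t+\Delta t)$ produced by the modified equal-area principle (\ref{ModEA}), and to show that the two agree through first order in $\Delta t$, so that their difference is $\mathcal{O}(\Delta t^2)$. For the true shock the Rankine-Hugoniot condition gives $x^*(t+\Delta t)=x^*(t)+\sigma(t)\,\Delta t+\mathcal{O}(\Delta t^2)$, where $\sigma(t)=\frac{F(u_L(t))-F(u_R(t))}{u_L(t)-u_R(t)}$ is the instantaneous shock speed. The entire content of the theorem is therefore the claim that the modified principle reproduces this leading behaviour, that is, that $\tilde{x}'(0)=\sigma(t)$.

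To make this precise I would set $X(s,\Delta t)=x(s,t)+\int_t^{t+\Delta t}F'(u(s,\tau))\,\mathrm{d}\tau$ for the flowed horizontal position, so that (\ref{ModEA}) becomes the pair of conditions $\int_{s_1}^{s_2}u(s,t)\,X_s(s,\Delta t)\,\mathrm{d}s=0$ and $X(s_1,\Delta t)=X(s_2,\Delta t)$, implicitly defining $s_1(\Delta t),s_2(\Delta t)$ and hence $\tilde{x}(\Delta t)=X(s_1(\Delta t),\Delta t)$. At $\Delta t=0$ we have $X(s,0)=x(s,t)$, so the two conditions reduce to the ordinary equal-area condition of Theorem \ref{Thm1} at time $t$; by the hypothesis that the curve is an equal-area curve about $x^*(t)$, the solution is $s_1(0)=s_1^*$, $s_2(0)=s_2^*$ with $\tilde{x}(0)=x^*(t)$ and $u(s_1^*,t)=u_L(t)$, $u(s_2^*,t)=u_R(t)$. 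Differentiability of $s_1,s_2$ near $\Delta t=0$ then follows from the implicit function theorem, provided the Jacobian of the two defining conditions is nonsingular there; this non-degeneracy at an isolated, already-formed shock ($s_1^*<s_2^*$) is the one analytic hypothesis I would need to verify, and is where the argument is most delicate.

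With differentiability in hand, I would differentiate both defining conditions in $\Delta t$ and evaluate at $\Delta t=0$. The essential feature of the modification is that the height $u(s,t)$ is frozen, so differentiating the integral produces only the term $\int_{s_1^*}^{s_2^*}u(s,t)\,X_{s\Delta t}(s,0)\,\mathrm{d}s=\int_{s_1^*}^{s_2^*}u\,F''(u)\,u_s\,\mathrm{d}s$; the source contribution $\int Q\,x_s\,\mathrm{d}s$ that spoiled Theorem \ref{NonHomEqualArea} is absent precisely because the height is not advanced in time. This integral telescopes, since $\frac{\partial}{\partial s}\bigl(uF'(u)-F(u)\bigr)=u\,F''(u)\,u_s$, giving $\bigl[uF'(u)-F(u)\bigr]_{s_1^*}^{s_2^*}$. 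Writing $w=\tilde{x}'(0)$ and using the differentiated position-matching constraint together with $X_{\Delta t}(s,0)=F'(u(s,t))$, the two first-order relations form a small linear system in the endpoint velocities $x_s(s_1^*,t)\,s_1'(0)$ and $x_s(s_2^*,t)\,s_2'(0)$ which, after the $uF'(u)$ terms cancel, collapses to $(u_R-u_L)\,w=F(u_R)-F(u_L)$. Hence $w=\tilde{x}'(0)=\frac{F(u_L)-F(u_R)}{u_L-u_R}=\sigma(t)$.

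Combining the two expansions gives $\tilde{x}(t+\Delta t)-x^*(t+\Delta t)=\bigl(\tilde{x}(0)-x^*(t)\bigr)+\bigl(\tilde{x}'(0)-\sigma(t)\bigr)\Delta t+\mathcal{O}(\Delta t^2)=\mathcal{O}(\Delta t^2)$, which is the asserted second-order one-step accuracy. I expect the main obstacle to be the solvability and differentiability of the modified equal-area endpoints through the implicit function theorem; once that is secured, the remaining work is the bookkeeping of the first-order expansion, whose punchline is the telescoping of the frozen-height flux integral into the exact Rankine-Hugoniot speed.
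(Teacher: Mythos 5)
Your proposal is correct and follows essentially the same route as the paper's proof: differentiate the modified equal-area conditions in $\Delta t$, use the frozen height $u(s,t)$ to eliminate the source contribution, telescope (integrate by parts) the flux integral $\int u\,F''(u)\,u_s\,\mathrm{d}s$, and recover the Rankine--Hugoniot speed at leading order before integrating to get the $\mathcal{O}(\Delta t^2)$ one-step error. The only differences are organizational --- you evaluate the derivative at $\Delta t=0$ and explicitly flag the implicit-function-theorem solvability of the endpoints, whereas the paper differentiates at general $\Delta t$ and then inserts the approximations $F'(u(s,t+\Delta t))=F'(u(s,t))+\mathcal{O}(\Delta t)$ and $u(s(t+\Delta t),t)=u(s(t),t)+\mathcal{O}(\Delta t)$ to reach the same conclusion.
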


\begin{proof}
Suppose $\langle x(s,t) , u(s,t) \rangle$ is the parametric curve obtain by solving the characteristic equations (\ref{CharEq}) which contains an equal-area curve at position $x(s_1(t),t)=x(s_2(t),t)=x^*(t)$. This implies
\begin{equation}
\int_{s_1(t)}^{s_2(t)}u(s,t)x_s(s,t)\text{d}s=0.
\end{equation}
Applying the modified equal-area principle after $\Delta t$ seconds yields
\begin{align}
&\int_{s_1(t+\Delta t)}^{s_2(t+\Delta t)}u(s,t)x_s(s,t+\Delta t)\text{d}s=0, \quad \text{with,}\label{ModEA2}\\
&x(s_1(t+\Delta t),t+\Delta t)=x(s_2(t+\Delta t),t+\Delta t).\nonumber
\end{align}
Differentiating in $\Delta t$ yields

\begin{align*}
&u(s_2(t+\Delta t),t)x_s(s_2(t+\Delta t),t+\Delta t)s_2'(t+\Delta t)\\
&-u(s_1(t+\Delta t),t)x_s(s_1(t+\Delta t),t+\Delta t)s_1'(t+\Delta t)\\
&+\int_{s_1(t+\Delta t)}^{s_2(t+\Delta t)}u(s,t)x_{st}(s,t+\Delta t)\text{d}s=0.
\end{align*}
Applying the same techniques as in Theorem \ref{NonHomEqualArea} yields
\begin{align}
\frac{d}{dt}x(s_1(t+\Delta t),t+\Delta t)\left(u(s_2(t+\Delta t),t)-u(s_1(t+\Delta t),t)\right)\label{ModEAPF1}\\
+u(s_1(t+\Delta t),t)F'(u(s_1(t+\Delta t),t+\Delta t))\nonumber\\
-u(s_2(t+\Delta t),t)F'(u(s_2(t+\Delta t),t+\Delta t))\nonumber\\+\int_{s_1(t+\Delta t)}^{s_2(t+\Delta t)}u(s,t)F''(u(s,t+\Delta t))u_s(s,t+\Delta t)\text{d}s=0&.\nonumber
\end{align}
Applying integration by parts on the integration term we obtain
\begin{align}
\int_{s_1(t+\Delta t)}^{s_2(t+\Delta t)}u(s,t)F''(u(s,t+\Delta t))u_s(s,t+\Delta t)\text{d}s=u(s,t)F'(u(s,t+\Delta t))\biggr|_{s=s_1(t+\Delta t)}^{s_2(t+\Delta t)}&\label{ModEAPF2}\\
-\int_{s_1(t+\Delta t)}^{s_2(t+\Delta t)}u_s(s,t)F'(u(s,t+\Delta t))\text{d}s&\nonumber.
\end{align}
Plugging (\ref{ModEAPF2}) into (\ref{ModEAPF1}), cancelling terms and then solving for $\frac{d}{dt}x(s_1(t+\Delta t),t+\Delta t)$ yields the equation

\begin{equation}
\frac{d}{dt}x(s_1(t+\Delta t),t+\Delta t)=\frac{\int_{s_1(t+\Delta t)}^{s_2(t+\Delta t)}u_s(s,t)F'(u(s,t+\Delta t))\text{d}s}{u(s_2(t+\Delta t,t)-u(s_1(t+\Delta t,t)}.\label{ModEAPF3}
\end{equation}
Using the first order approximation in time of $F'(u(s,t+\Delta t))=F'(u(s,t))+\mathcal{O}(\Delta t)$ in (\ref{ModEAPF3}) allows us to compute the integration term  to first order, which gives us
\begin{equation}
\frac{d}{dt}x(s_1(t+\Delta t),t+\Delta t)=\frac{F(u(s_2(t+\Delta t,t))-F(u(s_1(t+\Delta t,t))}{u(s_2(t+\Delta t,t)-u(s_1(t+\Delta t,t)}+\mathcal{O}(\Delta t).\label{ModEAPF4}
\end{equation}
Using that $u(s(t+\Delta t),t)=u(s(t),t)+\mathcal{O}(\Delta t)$, long division yields 
\begin{equation}
\frac{d}{dt}x(s_1(t+\Delta t),t+\Delta t)=\frac{F(u(s_2(t),t))-F(u(s_1(t),t))}{u(s_2(t),t)-u(s_1(t),t)}+\mathcal{O}(\Delta t).\label{ModEAPF5}
\end{equation}
Integrating from $t$ to $t+\Delta t$ and using our assumptions that $x(s_1(t),t)=x^*(t)$ and $u(s_1(t),t)=u_L(t)$ and $u(s_2(t),t)=u_R(t)$, we obtain
\begin{equation}
x(s_1(t+\Delta t),t+\Delta t)=x^*(t)+\frac{F(u_R(t))-F(u_L(t))}{u_R(t)-u_L(t)}\Delta t+\mathcal{O}(\Delta t^2),\label{ModEAFinal}
\end{equation}
which agrees with the true shock position given by the Rankine-Hugoniot condition up to second order.
\end{proof}

\begin{remark}
Theorem \ref{ModEqualAreaThm} states that given the correct initial shock position along with upper and lower shock states, we are able to approximate the shock position after $\Delta t$ seconds with error proportional to $\Delta t^2$. Since our goal in this work is to obtain high-order numerical schemes, it would seem that applying the modified equal-area principle would eliminate the possibility of obtaining higher than second order. The idea here is that we can use an adapted time step of $\Delta \tilde{t}$ for a single step to obtain the desired accuracy to not impact the overall error in the problem, then continue with a different method once we have a sufficiently accurate initial shock position.

We also note we have access to the correct initial shock position and states at the moment the shock forms since the initial shock position and states are given from the characteristic equations (\ref{CharEq}). Therefore we will utilize the modified equal-area principle when we predict that a shock is forming.
\end{remark}

The final missing piece of our approach is how to propagate the shock once we know its initial position and shock states. As discussed above, our main objective is to predict the shock position as accurately as possible, since everything else comes directly from solving the characteristic equations, which we have already shown can be done to high accuracy. In the following subsection we introduce the parametric shock propagation method.

\subsection{Parametric Shock Propagation Method}\label{PSPM}
Suppose we have an isolated shock at $x^*(t)$ with smooth curves $u_L(x,t)$ to the left, defined for $x\leq x^*(t)$, and $u_R(x,t)$ to the right, defined for $x\geq x^*(t)$, with $u_L(x^*(t),t)>u_R(x^*(t),t)$, as depicted in Figure \ref{PSPMFig1}. Provided the shock remains isolated from other shocks or discontinuities, the Rankine-Hugoniot condition determines the shock motion through the equation
\begin{equation}
\frac{d}{dt}x^*(t)=\frac{F(u_L(x^*(t),t))-F(u_R(x^*(t),t))}{u_L(x^*(t),t)-u_R(x^*(t),t)},\label{RHC}
\end{equation}
where both $u_L(x,t)$ and $u_R(x,t)$ are flowing under the characteristic equations (\ref{CharEq}). In this situation it is clear that having an analytical representations for both $u_L(x,t)$ and $u_R(x,t)$ turns (\ref{RHC}) into an ordinary differential equation in which standard numerical methods can be applied.

\begin{figure}[!ht]
\begin{center}
\includegraphics[width=100mm,height=30mm]{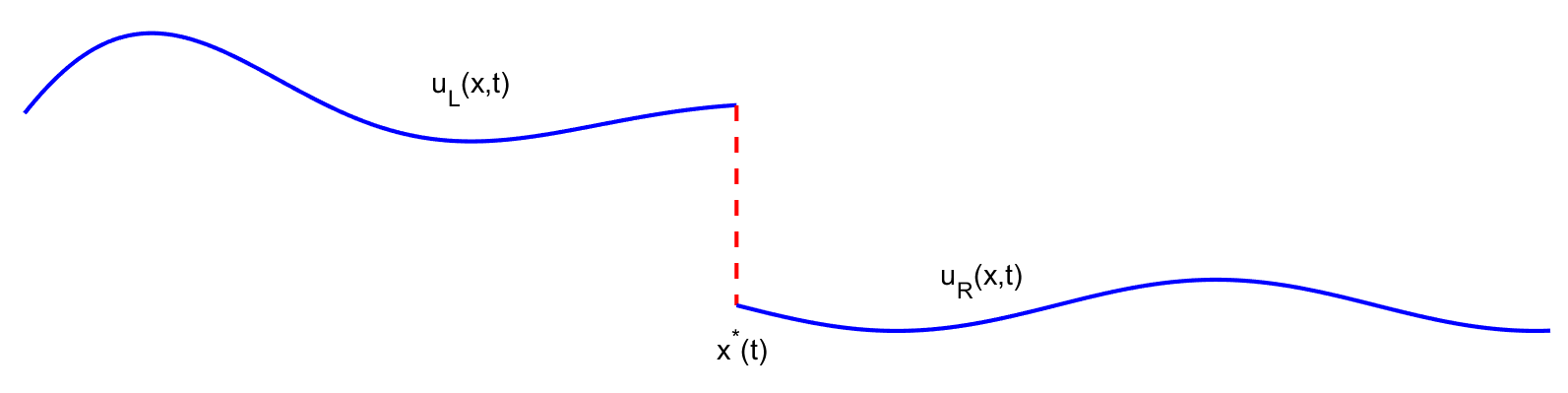}
\end{center}
\caption{ Initial configuration for Parametric Shock Propagation method. }
\label{PSPMFig1}
\end{figure} 

The key ingredient in allowing us to treat (\ref{RHC}) as a typical ordinary differential equation is splitting the problem into three distinct parts, the top curve, $u_L(x,t)$, the shock itself, located at $x^*(t)$, and the bottom curve $u_R(x,t)$. Since we are working with a uniformly convex flux function, we know that the shock will remain sandwiched between $u_L$ and $u_R$, provided the shock remains isolated.  Therefore, at time $t+\Delta t$ we can evaluate the slope field defined by (\ref{RHC}) at any point $z$ in the domain of both $u_L(x,t+\Delta t)$ and $u_R(x,t+\Delta t)$, where $u_L(z,t+\Delta t)>u_R(z,t+\Delta t)$, the region of overlap. From a numerical point of view this enables us to employ a wide range of methods, for example Runge-Kutta type methods, where the slope field is evaluated in several locations to obtain a high-order approximation of the shock speed between time $t$ and $t+\Delta t$. Later we see that multi-stage methods will require a stability condition to ensure that the slope field is always evaluated in a region of overlap.

To help clarify this approach we begin with a simple Euler step. Taylor expansion and then integration of (\ref{RHC}), as done in the proof of Theorem \ref{ModEqualAreaThm}, yields
\begin{equation}
x^*(t+\Delta t)=x^*(t)+\frac{F(u_L(x^*(t),t))-F(u_R(x^*(t),t))}{u_L(x^*(t),t)-u_R(x^*(t),t)}\Delta t +\mathcal{O}(\Delta t^2).\label{RHCT1}
\end{equation}
A single step of Forward Euler yields the predicted shock position at time $t+\Delta t$
\begin{equation}
\tilde{x}_E^*(t+\Delta t)=x^*(t)+\frac{F(u_L(x^*(t),t))-F(u_R(x^*(t),t))}{u_L(x^*(t),t)-u_R(x^*(t),t)}\Delta t\label{RHCEuler},
\end{equation}
which is equivalent to (\ref{RHCT1}) up to the second order term. An illustration is shown in Figure \ref{PSPMFig2}.

\begin{figure}[!ht]
\begin{center}
\includegraphics[width=100mm,height=35mm]{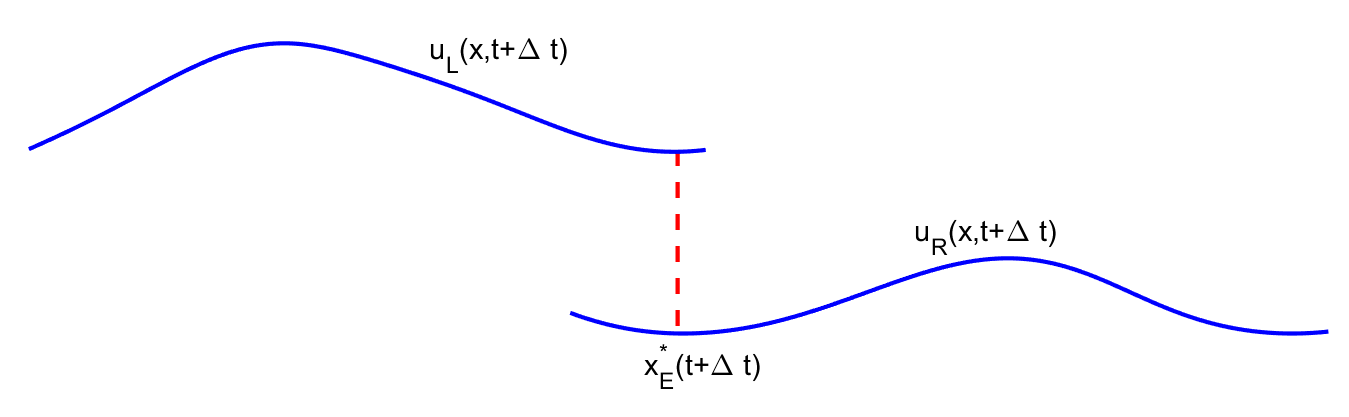}
\end{center}
\caption{ Predicted shock position using Forward Euler. }
\label{PSPMFig2}
\end{figure} 

Once we have verified that $\tilde{x}_E^*(t+\Delta t)$ is within the region of overlap, the portion of $u_L(x,t+\Delta t)$ to the right of $\tilde{x}_E^*(t+\Delta t)$ is removed and the portion of $u_R(x,t+\Delta t)$ to the left of $\tilde{x}_E^*(t+\Delta t)$ is removed. As shown in Figure \ref{PSPMFig3}, this process has brought us back to an equivalent state to Figure \ref{PSPMFig1}.

\begin{remark}
If at a time $t^*\in[t,t+\Delta t]$ either $u_L(x,t)$ or $u_R(x,t)$ becomes multi-valued, a fractional step of size $\Delta \tilde{t}$ must to taken instead to $\Delta t$ to locate the newly formed shock. Once located, we can propagate both shocks in the manner described above until either shock comes into contact with a discontinuity.
\end{remark}

\begin{figure}[!ht]
\begin{center}
\includegraphics[width=100mm,height=35mm]{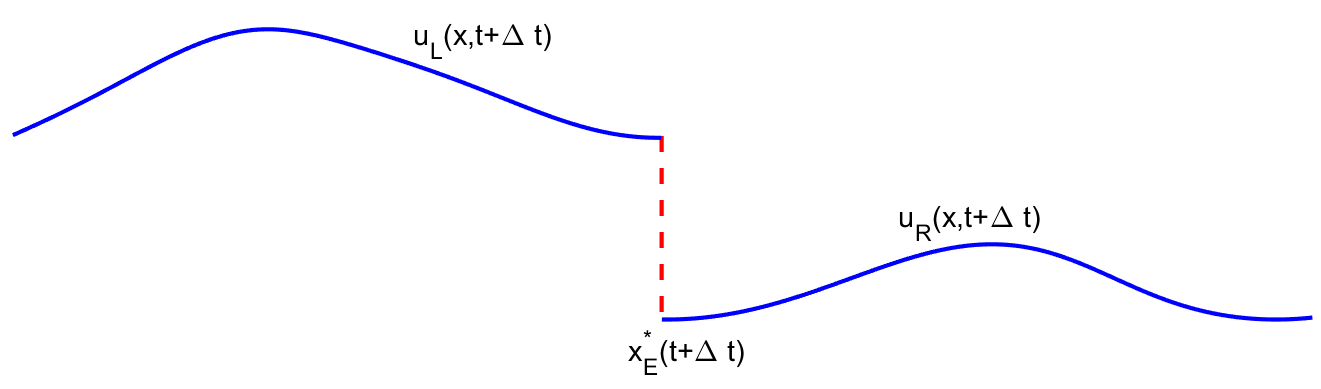}
\end{center}
\caption{ Predicted shock position using Forward Euler after removing overlap. }
\label{PSPMFig3}
\end{figure} 

This idea can easily be adapted to Improved Euler, also known as the predictor corrector method. As seen in Figure \ref{PSPMFig2}, we are able to now evaluate the slope at $\tilde{x}_E^*(t+\Delta t)$ using $u_L(x,t+\Delta t)$ and $u_R(x,t+\Delta t)$. Averaging the predicted slope, given by Forward Euler, and the corrected slope, given by this additional evaluation, we obtain the Improved Euler method 
\begin{align}
\tilde{x}^*(t+\Delta t)=x^*(t)+\frac{\Delta t}{2}\left(\frac{F(u_L(x^*(t),t))-F(u_R(x^*(t),t))}{u_L(x^*(t),t)-u_R(x^*(t),t)}\right.&\label{RHCIE}\\
+\left.\frac{F(u_L(\tilde{x}_E^*(t+\Delta t),t+\Delta t))-F(u_R(\tilde{x}_E^*(t+\Delta t),t+\Delta t))}{u_L(\tilde{x}_E^*(t+\Delta t),t+\Delta t)-u_R(\tilde{x}_E^*(t+\Delta t),t+\Delta t)} \right).\nonumber
\end{align}
Although we omit the calculation due to its length, by setting $\tilde{x}_E^*(t+\Delta t)=x^*(t)+\frac{F(u_L(x^*(t),t))-F(u_R(x^*(t),t))}{u_L(x^*(t),t)-u_R(x^*(t),t)}\Delta t$ and Taylor expanding (\ref{RHCIE}) in $\Delta t$ we obtain exactly the Taylor expansion of $x^*(t+\Delta t)$ from equation (\ref{RHC}) up to the $\Delta t^3$ term. In Section \ref{Results} we show how these ideas generalize to higher-order Runge-Kutta methods. Using computational software we have confirmed that indeed the Taylor expansions cancel up to the $\Delta t^5$ term in the Runge-Kutta 4 case, but these details are omitted due to their length.

Before moving to the results section we first need to justify that numerically we can always find a $\Delta t$ small enough such that each evaluation of multi-stage Runge-Kutta methods land in the region of overlap. But first we need a rigorous definition of the region of overlap.

\begin{definition}\label{ROO}
Suppose we have a shock at $x^*(t)$ with smooth curves $\tilde{u}_L(x,t)$ to the left, defined for $x\leq x^*(t)$, and $\tilde{u}_R(x,t)$ to the right, defined for $x\geq x^*(t)$, with $\tilde{u}_L(x^*(t),t)>\tilde{u}_R(x^*(t),t)$ as in Figure \ref{PSPMFig1}, where $\tilde{u}_L$ and $\tilde{u}_R$ are obtained by numerically solving the system (\ref{CharEq}). Without loss of generality suppose we parametrize $\tilde{u}_L(x,t)$ by the curve $\langle x_L(s,t), u_L(s,t) \rangle$, where $x_L(1,t)=x^*(t)$ and $u_L(1,t)=\tilde{u}_L(x^*(t),t)$. Similarly we parametrize $\tilde{u}_R(x,t)$ by the curve $\langle x_R(s,t), u_R(s,t) \rangle$, where $x_R(0,t)=x^*(t)$ and $u_R(0,t)=\tilde{u}_R(x^*(t),t)$. The Region of Overlap at time $t+\Delta t$ is all $x\in\mathbb{R}$ such that
$ x_R(0,t+\Delta t)<x<x_L(1,t+\Delta t)$.
\end{definition}

%

\begin{theorem}\label{Overlap}
Suppose we have an isolated shock at $x^*(t)$ with smooth curves $\tilde{u}_L(x,t)$ to the left, defined for $x\leq x^*(t)$, and $\tilde{u}_R(x,t)$ to the right, defined for $x\geq x^*(t)$, with $\tilde{u}_L(x^*(t),t)>\tilde{u}_R(x^*(t),t)$ as in Figure \ref{PSPMFig1}, where $\tilde{u}_L$ and $\tilde{u}_R$ are obtained by numerically solving the system (\ref{CharEq}). In addition we suppose that neither curve $u_L(x,t)$ or $u_R(x,t)$ form an additional shock between times $t$ and $t+\Delta t$. Then, given a Runge-Kutta method, there exists a $\Delta t$ small enough such that each stage of the method can be evaluated from the slope field defined by (\ref{RHC}), and therefore the utilized Runge-Kutta method can be constructed as in the standard ordinary differential equation setting.
\end{theorem}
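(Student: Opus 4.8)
The plan is to turn the uniform convexity of $F$ into a quantitative statement that the Rankine--Hugoniot shock speed is trapped \emph{strictly} between the horizontal velocities of the two endpoints $x_L(1,t)$ and $x_R(0,t)$, with a margin that is linear in the jump height $\delta := \tilde{u}_L(x^*(t),t)-\tilde{u}_R(x^*(t),t)>0$. Writing the secant slope in (\ref{RHC}) in integral form for any admissible pair of states with $u_L>u_R$,
\begin{equation*}
\frac{F(u_L)-F(u_R)}{u_L-u_R}=\frac{1}{u_L-u_R}\int_{u_R}^{u_L}F'(v)\,\text{d}v,
\end{equation*}
and using $F''\geq m>0$, a one-line estimate of $\int_{u_R}^{u_L}(F'(v)-F'(u_R))\,\text{d}v$ and its mirror image gives
\begin{equation*}
F'(u_R)+\tfrac{m}{2}(u_L-u_R)\;\leq\;\frac{F(u_L)-F(u_R)}{u_L-u_R}\;\leq\;F'(u_L)-\tfrac{m}{2}(u_L-u_R).
\end{equation*}
Thus on a neighbourhood of the shock the slope field (\ref{RHC}) stays away from both $F'(u_L)$ and $F'(u_R)$ by at least $\tfrac{m}{2}\delta>0$.

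Next I would translate this velocity margin into a spatial margin inside the region of overlap. By Definition \ref{ROO} the overlap interval at a substage time $\tau=t+c\,\Delta t$ is bounded by the endpoint trajectories $x_R(0,\tau)$ and $x_L(1,\tau)$. Because $\tilde{u}_L,\tilde{u}_R$ solve the smooth system (\ref{CharEq}) and neither develops an additional shock on $[t,t+\Delta t]$ by hypothesis, the parametrizations remain regular and these endpoints are well defined, so Taylor expansion yields
\begin{equation*}
x_R(0,\tau)=x^*(t)+F'(\tilde{u}_R)(\tau-t)+\mathcal{O}\big((\tau-t)^2\big),\qquad x_L(1,\tau)=x^*(t)+F'(\tilde{u}_L)(\tau-t)+\mathcal{O}\big((\tau-t)^2\big),
\end{equation*}
with $\tilde{u}_L,\tilde{u}_R$ the shock states at time $t$. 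Consequently the overlap window is nonempty and of width $\big(F'(\tilde{u}_L)-F'(\tilde{u}_R)\big)(\tau-t)+\mathcal{O}\big((\tau-t)^2\big)\geq m\,\delta\,(\tau-t)-\mathcal{O}(\Delta t^2)$.

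For a fixed Runge--Kutta method with finitely many nodes $0\leq c_1\leq\cdots\leq c_N\leq 1$, each stage evaluates (\ref{RHC}) at a position $X_i=x^*(t)+\Delta t\sum_j a_{ij}V_j$, where every stage slope $V_j$ is itself a secant slope of the bounded type above and hence agrees with the shock speed $V$ at $(x^*(t),t)$ up to $\mathcal{O}(\Delta t)$; therefore $X_i=x^*(t)+V c_i\Delta t+\mathcal{O}(\Delta t^2)$. Comparing $X_i$ against the two boundary trajectories at time $t+c_i\Delta t$ and invoking the velocity margin from the first paragraph gives a spatial separation of at least $\tfrac{m}{2}\delta\,c_i\Delta t-\mathcal{O}(\Delta t^2)$ from each boundary, which is strictly positive once $\Delta t$ is below a threshold depending only on $m$, $\delta$ and the smoothness of the curves. (The first stage, $c_i=0$, is evaluated precisely at the shock point $x^*(t)$, where both states are defined by construction, so it is always admissible.) Taking the smallest of the $N$ thresholds places every stage inside the overlap, where the denominator $u_L-u_R$ stays near $\delta>0$ by continuity so that (\ref{RHC}) is well defined and smooth, and the method assembles exactly as in the classical ODE setting.

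I expect the main obstacle to be the bookkeeping of the multi-stage dependence: since each $V_j$ is computed from positions produced by earlier stages, one must verify inductively that the $\mathcal{O}(\Delta t)$ deviations of the stage slopes from the exact shock speed do not accumulate enough to erode the $\mathcal{O}(\Delta t)$ overlap margin before the quadratic comparison is applied. The conceptual content, however, is carried entirely by the uniform-convexity estimate of the first paragraph: it is the strict positivity of the gaps $V-F'(u_R)$ and $F'(u_L)-V$ that forces a linear-in-$\Delta t$ overlap window straddling the shock, so that the quadratic truncation terms are dominated for all sufficiently small $\Delta t$.
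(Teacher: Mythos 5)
Your proposal is correct and follows essentially the same route as the paper: uniform convexity traps the Rankine--Hugoniot secant slope strictly between $F'(\tilde{u}_R)$ and $F'(\tilde{u}_L)$, the overlap boundaries move at exactly those characteristic speeds to leading order in $\Delta t$, and hence every stage evaluation lands in the region of overlap once $\Delta t$ is small enough. Your version merely sharpens the paper's qualitative inequality into an explicit margin $\tfrac{m}{2}\delta$ and is more careful about the stage-by-stage induction, which the paper leaves implicit.
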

\begin{proof}
As can be found in numerous numerical analysis of ordinary differential equations books, for example \cite{RK}, Runge-Kutta methods obtain the numerical approximation $\tilde{x}(t+\Delta t)$ through a convex combination of slopes in a neighbourhood of $\tilde{x}(t)$. Each slope, $k_i$, has the same first term, 
\begin{equation}
k_i=\frac{F(u_L(\tilde{x}(t),t))-F(u_R(\tilde{x}(t),t))}{u_L(\tilde{x}(t),t)-u_R(\tilde{x}(t),t)}+\mathcal{O}(\Delta t).
\end{equation}
Uniform convexity of the flux function $F$ implies that $F'(u_1)<F'(u_2)$ whenever $u_1<u_2$, therefore  since condition (\ref{RHC}) is the average value of $F'(u)$ between $u_R$ and $u_L$ we have
\begin{equation}
F'(u_R(\tilde{x}(t),t))<\frac{F(u_L(\tilde{x}(t),t))-F(u_R(\tilde{x}(t),t))}{u_L(\tilde{x}(t),t)-u_R(\tilde{x}(t),t)}<F'(u_L(\tilde{x}(t),t)).\label{UniformConv}
\end{equation} 
The region of overlap, given by Definition \ref{ROO}, has left boundary given by $x_R(0,t+\Delta t)=x_R(0,t)+F'(u_R(\tilde{x}(t),t))\Delta t+\mathcal{O}(\Delta t^2)$ and right boundary $x_L(1,t+\Delta t)=x_L(1,t)+F'(u_L(\tilde{x}(t),t))\Delta t+\mathcal{O}(\Delta t^2)$. Therefore inequality (\ref{UniformConv}) implies,  for small enough $\Delta t$, we have
\begin{equation}
\frac{d}{d \Delta t}x_R(0,t+\Delta t)<k_i<\frac{d}{d \Delta t}x_L(1,t+\Delta t).
\end{equation}
Therefore, provided $\Delta t$ is small enough, every stage of a Runge-Kutta method can be evaluated from the slope field given by the numerical approximation of $u_L(x,t)$ and $u_R(x,t)$ in the differential equation (\ref{RHC}).
\end{proof}

\begin{remark}
It is important to note that either $u_L(x,t)$ or $u_R(x,t)$ may become multi-valued between time $t$ and $t+\Delta t$. If this occurs within the region of overlap then the resulting slope field given by (\ref{RHC}) is no longer reliably smooth. A smaller time step is required and then a point of contact between the two shocks must be approximated.
\end{remark}

In the following section we present detailed numerical examples showing that the Parametric Shock Propagation Method captures the shock position to high spatial and temporal order.

\section{Numerical results}\label{Results}

\begin{example}\label{Ex4}

In this example we consider the following Cauchy problem
\begin{equation}
\begin{cases}
u_t+uu_x=-u(1-u)\label{IntersectEx}\\
u(x,0)=g(x),
\end{cases}
\end{equation}
where,
\begin{equation*}g(x)=
\begin{cases}
0.9, \quad \text{for $x<2$}\\
0.5, \quad \text{for $2<x<2.5$}\\
0.2, \quad \text{for $x>2.5.$}
\end{cases}
\end{equation*} 
\begin{figure}[!ht]
\begin{center}
\includegraphics[width=70mm,height=40mm]{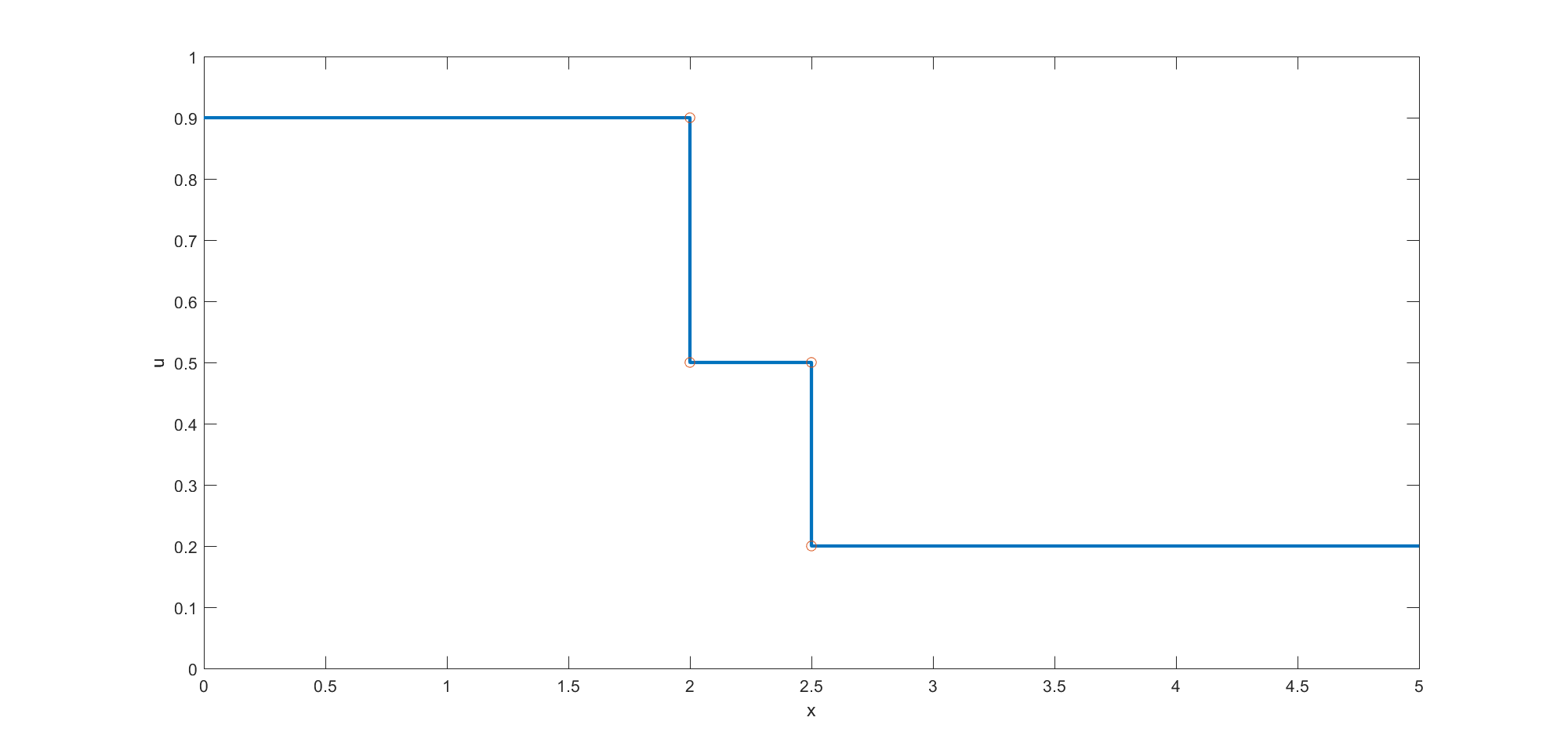}
\end{center}
\caption{ Initial condition from Example \ref{Ex4}. }
\label{ShockCollisionIC}
\end{figure} 
The characteristic equations associated with each constant state of $g(x)$ can be solved exactly. The equation for $u$ can be solved independently, yielding
\begin{equation}
u(x_0,t)=\frac{1}{1+(\frac{1}{g(x_0)}-1)e^{t}}.\label{ShockCollisionU}
\end{equation}
Using (\ref{ShockCollisionU}) we obtain the height of each state at time $t$, given by
\begin{align}
u_L(t)&=\frac{1}{1+(\frac{1}{0.9}-1)e^{t}}=\frac{1}{1+(\frac{0.1}{0.9})e^{t}}\label{ShockCollisionUL}\\
u_M(t)&=\frac{1}{1+(\frac{1}{0.5}-1)e^{t}}=\frac{1}{1+e^{t}}\label{ShockCollisionUM}\\
u_R(t)&=\frac{1}{1+(\frac{1}{0.2}-1)e^{t}}=\frac{1}{1+4e^{t}}\label{ShockCollisionUR}
\end{align}
Equations (\ref{ShockCollisionUL}-\ref{ShockCollisionUR}) allow us to compute the shock speeds corresponding the discontinuities in $g(x)$ at $x=2$ and $x=2.5$. The shock starting at $x=2$ must satisfy the equation
\begin{equation}
\frac{d}{dt}x_1^*(t)=\frac{F(u_L(t))+F(u_M(t))}{u_L(t)+u_M(t)}=\frac{u_L(t)+u_M(t)}{2}.
\end{equation}
Therefore we have 
\begin{align}
x_1^*(t)&=2+\frac{1}{2}\int_0^t\frac{1}{1+(\frac{0.1}{0.9})e^{\tau}}+\frac{1}{1+e^{\tau}}\text{d}\tau.\nonumber\\
&=2+t+\frac{1}{2}\left(\ln\left(\frac{2}{0.9}\right)-\ln\left(\left(1+\frac{0.1}{0.9}e^t\right)\left(1+e^t\right)\right)\right)\label{ShockCollisionLeftShock}.
\end{align}
Similarly we obtain an equation for the shock starting at $x=2.5$,
\begin{align}
x_2^*(t)&=2.5+t+\frac{1}{2}\left(\ln\left(10\right)-\ln\left(\left(1+e^t\right)\left(1+4e^t\right)\right)\right)\label{ShockCollisionRightShock}.
\end{align}

For this initial condition the two shocks collide at time $t^*$ given by
\begin{equation}
t^*=\ln\left(\frac{(1+\frac{0.1}{0.9})-5e}{\frac{0.5}{0.9}e-\frac{4}{0.9}}\right)\approx 1.44769.
\end{equation}
Figure \ref{ShockCollisionTime} shows the solution before and after the shocks intersect.

\begin{figure}[!ht]
\begin{center}
\includegraphics[width=45mm,height=40mm]{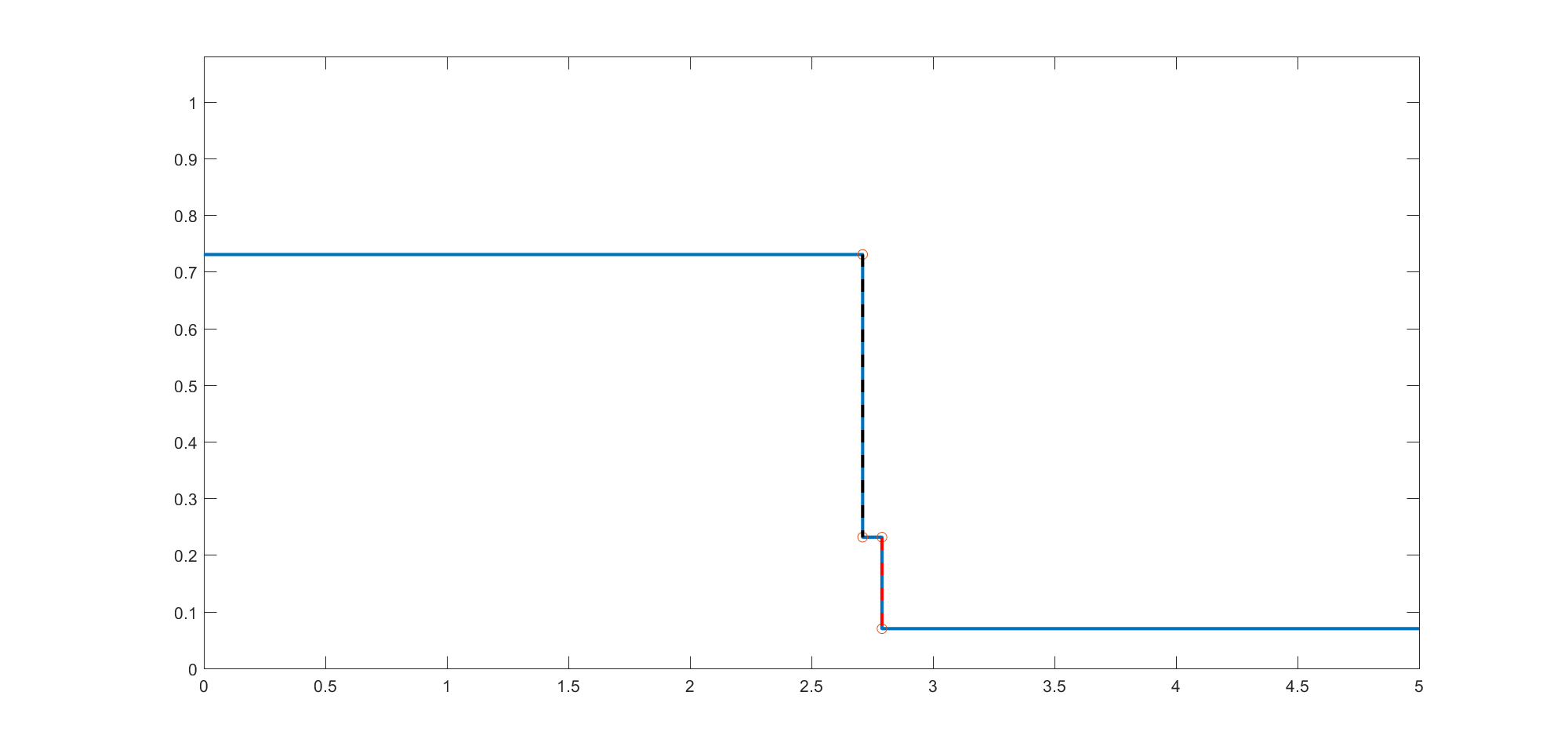}
\includegraphics[width=45mm,height=40mm]{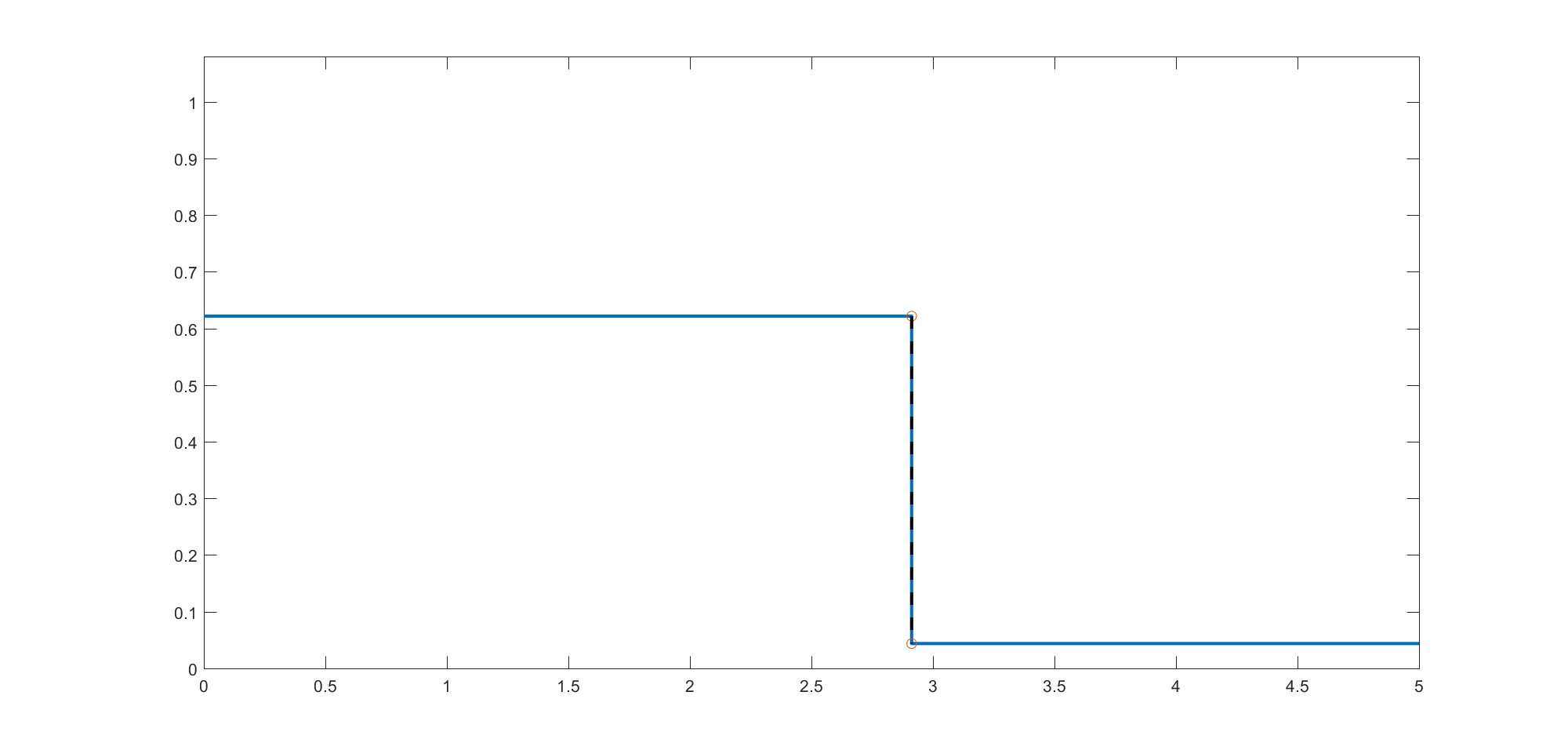}
\end{center}
\caption{ The solution from Example \ref{Ex4} before and after the shocks collide at time $t^*\approx 1.44769$. }
\label{ShockCollisionTime}
\end{figure} 

Applying the parametric shock propagation method we compute the approximate position of the shock at time $t=2$. Since we are working with constant states, there is no spatial interpolation error. Therefore the only source of error comes from the propagation of the shocks and their collision at $t^*$. Applying the fourth order Runge-Kutta version of the method described in Section \ref{PSPM} we obtain fourth order accuracy in the shock position. The convergence plot is shown in Figure \ref{ShockCollisionTimeer}.

\begin{figure}[!ht]
\begin{center}
\includegraphics[width=70mm,height=50mm]{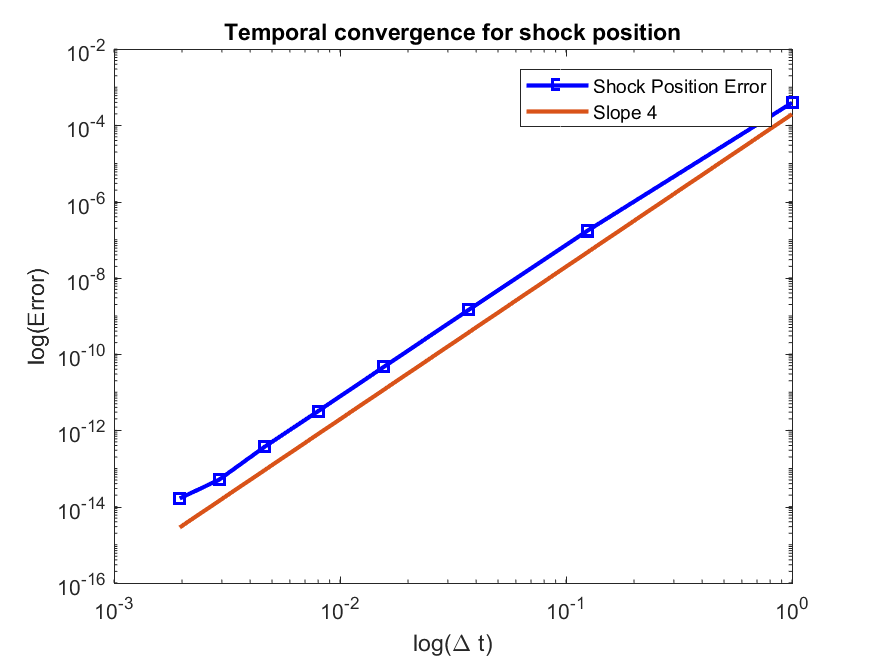}
\end{center}
\caption{ Convergence of the fourth order Runge-Kutta Shock Propagation Method. }
\label{ShockCollisionTimeer}
\end{figure}
\end{example}

\begin{remark}
In order to not lose any accuracy as the shocks merge we are required to numerically approximate the time $t^{**}$ when the shocks collide. In the above example we did this by taking a few adapted time steps $\Delta \tilde{t}$ until the shocks were within machine precision of each other and thus merged into a single shock. Ensuring that each stage of the Runge-Kutta method evaluates the slope field correctly is required. In this case we had to ensure that the propagation of the left shock did not require any evaluations past the right shock. This condition can easily be checked at each stage of the Runge-Kutta method.
\end{remark}

\begin{example}\label{Ex5}
In our final example we aim to track the shock propagating under the boundary value problem
\begin{equation}
\begin{cases}
u_t+uu_x=\sin(x)u\label{Ex5IC}\\
u(0,t)=\frac{1}{2}, \quad \text{for $t\geq 0$}\nonumber\\
u(x,0)=0, \quad \text{for $x>0$}\nonumber.
\end{cases}
\end{equation}
This example breaks down into a Riemann problem where the left shock state is given by the solution to the curve traced out by the system
\begin{equation}\label{Ex5IC2}
 \left\{
\begin{array}{l}
     \dot{x}=u\\
     \dot{u}=\sin(x)u\\
     x(0)=0\\
     u(0)=\frac{1}{2}.
\end{array}\right.
\end{equation}
As we saw in Example \ref{Ex3} we are able to perform a high-order spatial and temporal interpolation of this curve using parametric interpolation methods. Before jumping into the full numerical method we first ensure that the appropriate temporal error from the shock propagation method is achieved. Using the analytical solution to (\ref{Ex5IC}), derived in Example \ref{Ex3}, we simply solve the differential equation given by the Rankine-Hugoniot Condition,
\begin{equation}
\frac{d}{dt}x^*(t)=\frac{3-2\cos(x^*(t))}{4}.
\end{equation}
The true shock position is therefore 
\begin{equation}
x(t)=\arctan\left(\frac{\tan\left(\frac{\sqrt{5}}{4}t\right)}{\sqrt{5}}\right)\label{Ex5ShockPos}.
\end{equation}
It comes to no surprise that the shock propagation method succeeds to obtain fourth-order accuracy when employing the fourth order Runge-Kutta method on the differential equation (\ref{Ex5ShockPos}). The convergence of the shock position at time $t=5$ is plotted in Figure \ref{ShockTempConv}.

\begin{figure}[!ht]
\begin{center}
\includegraphics[width=70mm,height=50mm]{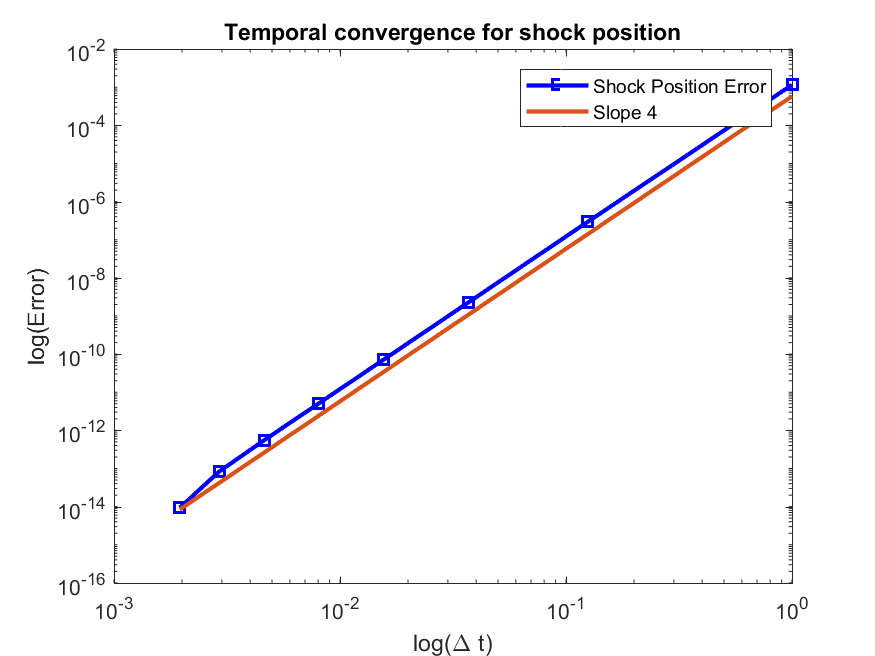}
\end{center}
\caption{ Convergence of the fourth order Runge-Kutta Shock Propagation Method on exact solution. }
\label{ShockTempConv}
\end{figure}

Now we solve the same the problem utilizing the parametric interpolation discussed in Section \ref{PIF}. To do this we compute the extended characteristic equations as done in Example \ref{Ex3}. Then, at each stage of the Runge-Kutta shock propagation we evaluate the approximate value of $u(x^*(t+\Delta t))$ using the computed parametric interpolants. A plot of the solution at time $t=3.5$ and $t=5$ is given in Figure \ref{ShockPlot}.

\begin{figure}[!ht]
\begin{center}
\includegraphics[width=45mm,height=40mm]{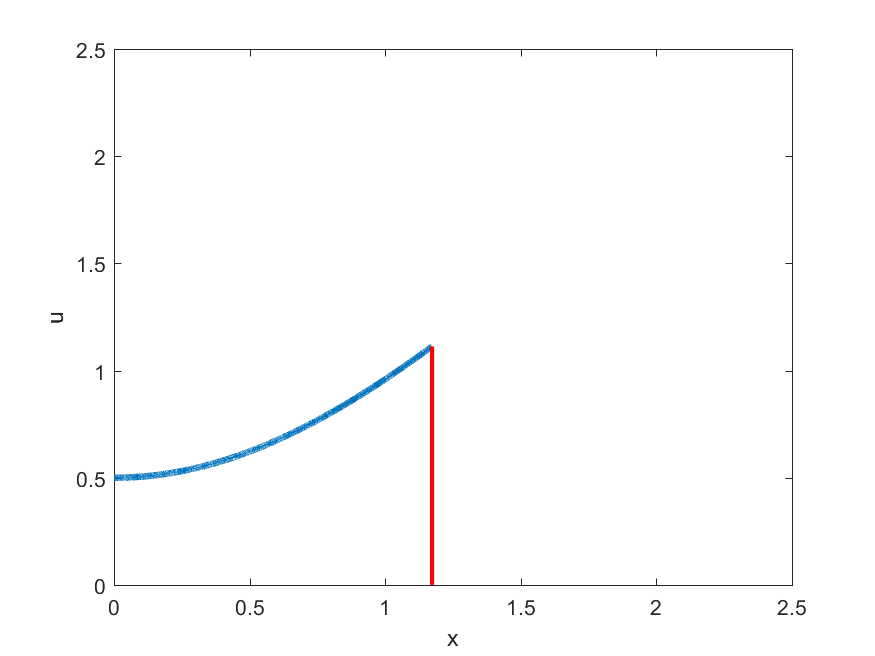}
\includegraphics[width=45mm,height=40mm]{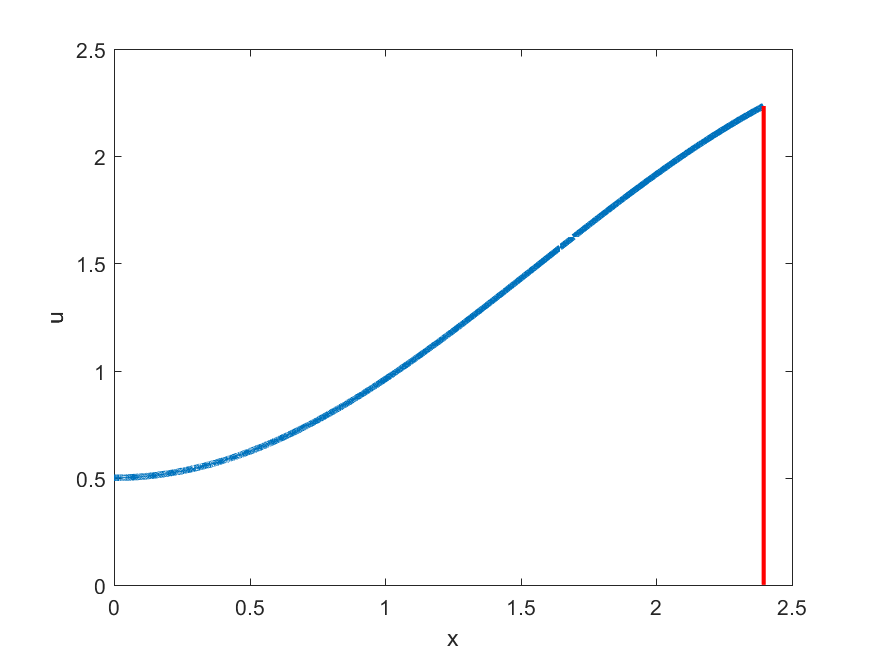}
\end{center}
\caption{ The solution from Example \ref{Ex5} at times $t=3.5$ and $t=5$. }
\label{ShockPlot}
\end{figure} 

Taking small enough time steps to ensure that the temporal error does not influence the convergence, we compute the spatial convergence in the shock position at time $t=5$. First we employ a basic parametric Hermite cubic interpolation and obtain the fourth order convergence seen in Figure \ref{ShockSpatialConv}. Next, using a sequence of Hermite interpolants to compute the approximate area, we compute the area-preserving cubic B\'ezier of \cite{mcgregor2019area} and indeed achieve fifth order as predicted. 

\begin{figure}[!ht]
\begin{center}
\includegraphics[width=70mm,height=50mm]{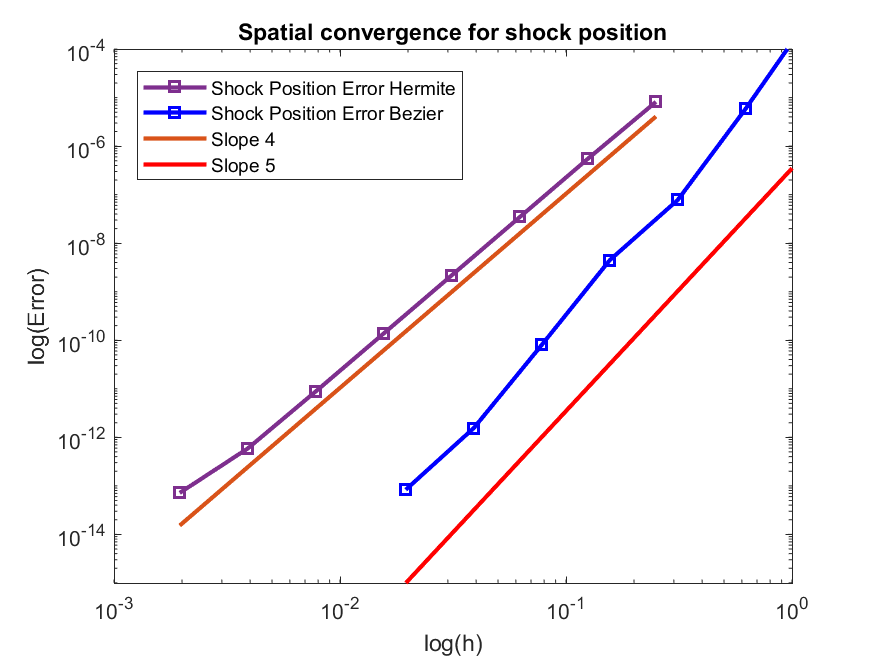}
\end{center}
\caption{ Convergence of the fourth order Runge-Kutta Parametric Shock Propagation Method.}
\label{ShockSpatialConv}
\end{figure}

\end{example}

\begin{remark}
Unlike the homogeneous case, it can be difficult to compute the area under each portion of the parametric curve given by solving the characteristic equations. In Example \ref{Ex5} we used approximately ten Hermite interpolants to approximate the area under each B\'ezier curve. In the end we obtained one order higher, but with an added computational cost. The main takeaway from this example is that parametric interpolation methods which are capable of obtaining high-order can indeed be applied to this framework resulting in high-order numerical approximation of the weak solution, even in the presence of spatial and solution dependent source terms.
\end{remark}

\section{Discussion}\label{Discussion}
In this paper we present a framework for obtaining high-order numerical solutions of scalar conservation laws in one-space dimension containing shocks. Unlike the majority of methods available, the approach presented here, which relies on high-order parametric interpolation of the characteristic equations, preserves the spatial and temporal order at the shock location. In the homogeneous case, shocks are located through an equal-area projection, while in the non-homogeneous case,  a modified  equal-area principle is applied for a single step to find an initial shock location to second order accuracy. The Rankine-Hugoniot condition can then be solved directly using the left and right shock states given by parametric interpolation of the characteristic equations. 

In the homogeneous case we showed that the data required to perform the area-preserving B\'ezier interpolation of \cite{mcgregor2019area} is easily obtained from the characteristic equations. Fifth-order spatial accuracy, or better, is indeed obtained as expected. In the non-homogeneous case we showed that a simple parametric Hermite interpolation works very well, and although we succeeded to obtain the spatial fifth-order accuracy using the area-preserving interpolation, it came at an additional computational cost. If one seeks higher-order spatial interpolation we suggest doing a further extension of the characteristic equations to include second derivatives in the parametrization. From here one would be able to apply the sixth-order accurate curvature matching method discussed in \cite{DeBoore}.
There are also many parametric quintic methods within the parametric interpolation literature that could be employed.
Going forward we are interested in adapting these methods to system of conservation laws in one-space dimension. In particular we are interested in studying system of two equations containing Riemann Invariants. We also are interested in transforming the methods discussed in this paper onto a grid, instead of a tracking particles as they flow along the characteristics. From this perspective it would be interesting to study the differences between area-preserving parametric methods and finite volume methods.  
\bibliographystyle{siam}
\bibliography{references}

\end{document}